\documentclass[12pt]{article}
\usepackage[a4paper, total={17cm,25cm}]{geometry}

\usepackage{amsmath,amssymb,amsfonts}
\usepackage[alphabetic,y2k,initials]{amsrefs}

\usepackage{color, graphics}
\usepackage{float}

\usepackage{tikz}
\usepackage{pgfplots,tikz-3dplot}

\usepackage{pdfsync}
\usepackage{hyperref}
\usepackage{multirow}

\usepackage{longtable}

\usepackage{calrsfs}
\DeclareMathAlphabet{\pazocal}{OMS}{zplm}{m}{n}
\newcommand{\LL}{\mathcal{L}}

\newcommand{\Q}{\pazocal{Q}}
\newcommand{\E}{\pazocal{E}}

\newcommand{\Pol}{\mathcal{P}}

\newcommand{\Curve}{\mathcal{C}}

\newcommand{\refeq}{\eqref}

\def\rank{\mathrm{rank}}
\def\sign{\mathrm{sign\,}}

\def\dist{\mathrm{dist}}

\newtheorem{theorem}{Theorem}[section]
\newtheorem{proposition}[theorem]{Proposition}
\newtheorem{corollary}[theorem]{Corollary}
\newtheorem{lemma}[theorem]{Lemma}
\newtheorem{remark}[theorem]{Remark}

\newenvironment{proof}[1][Proof]{\noindent\textit{#1.} }{\hfill$\Box$\newline\medskip}

\numberwithin{equation}{section}

\setcounter{tocdepth}{2}

\title{Periodic trajectories of ellipsoidal billiards in the $3$-dimensional Minkowski space}

\usepackage{authblk}
\author[1,3]{Vladimir Dragovi\'c}
\author[2,3]{Milena Radnovi\'c}
\affil[1]{\textsc{The University of Texas at Dallas, Department of Mathematical Sciences}}
\affil[2]{\textsc{The University of Sydney, School of Mathematics and Statistics}}
\affil[3]{\textsc{Mathematical Institute SANU, Belgrade}}
\affil[ ]{\texttt{vladimir.dragovic@utdallas.edu, milena.radnovic@sydney.edu.au}}

\date{}

\begin{document}

\maketitle

\begin{center}
\emph{Dedicated to Professor Nalini Joshi on the occasion of her anniversary.}
\end{center}

\begin{abstract}
In this paper, we give detailed analysis and description of periodic trajectories of the billiard system within an ellipsoid in the $3$-dimensional Minkowski space, taking into account all possibilities for the caustics.
The conditions for periodicity are derived in algebro-geometric, analytic, and polynomial form.
\end{abstract}

\tableofcontents

\section{Introduction}

Discrete integrable systems occupy an important part of the scientific activity and legacy of Professor Nalini Joshi.
There are several recent monographs related to discrete integrability as intensively developed field of pure and applied mathematics (see \cite{DuistermaatBOOK, BS2008book, HJN2016, Joshi2019book}).
Integrable billiards (see \cite{KozTrBIL, DragRadn2011book}) form an important class of discrete integrable systems. This paper is devoted  to integrable billiards in the $3$-dimensional Minkowski space, merging two lines of our previous studies.

We will derive here the periodicity conditions for such billiards in different forms: in algebro-geometric terms and in terms of polynomial functional equations.
More about extremal polynomials and related Pell's equations one can find in \cite{Akh4, Bogatyrev2012} and references therein.
Following \cite{BirkM1962, KhTab2009}, we  introduced notions of relativistic quadrics and applied them to billiards in the pseudo-Euclidean spaces in \cite {DragRadn2012adv}. 
In a more recent paper \cite{DragRadn2019cmp}, we established a fundamental relationship between periodic integrable billiards in the Euclidean spaces of arbitrary dimension and extremal polynomials and Pell's equations. We applied these ideas in more detail in the basic, planar cases in  \cite{DragRadn2019rcd} for the Euclidean metrics and in \cite{ADR2019rcd} for the Minkowski metric. 
In this work, we deal with the three-dimensional Minkowski space, as  a  gateway to the study of billiards in higher-dimensional pseudo-Euclidean spaces.
The results of this paper provide the solution to a known open problem, Problem 5.2 from \cite{GKT2007}, which is also Problem 7 from \cite{Tab2015}. 
See Remark \ref{rem:open-problem} for more detail.

The organization of the paper is as follows. Section \ref{sec:intro} introduces basic notation. Section \ref{sec:algebro} is devoted to algebro-geometric formulation of periodicity conditions, while Section \ref{sec:polynomial} derives the conditions of periodicity in terms of Pell's equations and related polynomial functional equations.

\section{Confocal families of quadrics and billiards}\label{sec:intro}

In this section, we recall necessary notions and propertes related to confocal families of quadrics and billiards within ellipsoids in the Minkowski space.
A more detailed account can be found in \cite{GKT2007,KhTab2009,DragRadn2012adv}.

\paragraph*{The Minkowski space $\mathbf{E}^{2,1}$}  is $\mathbf{R}^3$ with \emph{the Minkowski scalar product}: $\langle X,Y\rangle=X_1Y_1+X_2Y_2-X_3Y_3$.

\emph{The Minkowski distance} between points $X$, $Y$ is
$
\dist(X,Y)=\sqrt{\langle{X-Y,X-Y}\rangle}.
$
Since the scalar product can be negative, notice that the Minkowski distance can have imaginary values as well.

Let $\ell$ be a line in the Minkowski space, and $v$ its vector.
The line $\ell$ is called
\emph{space-like} if $\langle{v,v}\rangle>0$;
\emph{time-like} if $\langle{v,v}\rangle<0$;
and \emph{light-like} if $\langle{v,v}\rangle=0$.
Two vectors $x$, $y$ are \emph{orthogonal} in the Minkowski space if $\langle x,y \rangle=0$.
Note that a light-like vector is orthogonal to itself.

\paragraph*{Confocal families.}
Denote by
\begin{equation}\label{eq:ellipsoid}
\E :\
\frac{x_1^2}{a_1}+\frac{x_2^2}{a_2}+\frac{x_3^2}{a_3}=1,
\end{equation}
with $a_1>a_2$, $a_3>0$, an ellipsoid.
Let us remark that equation of any ellipsoid in the Minkowski
space can be brought into the canonical form (\ref{eq:ellipsoid})
using transformations that preserve the scalar product.

The family of quadrics confocal with $\E$ is:
\begin{equation}\label{eq:confocal}
\Q_{\lambda}\ :\
\frac{x_1^2}{a_1-\lambda} +\frac{x_2^2}{a_2-\lambda} +
\frac{x_3^2}{a_3+\lambda}=1,\qquad\lambda\in\mathbf{R}.
\end{equation}

The family (\ref{eq:confocal}) contains four geometrical types of quadrics:
\begin{itemize}
\item
$1$-sheeted hyperboloids oriented along $x_3$-axis, for $\lambda\in(-\infty,-a_3)$;

\item
ellipsoids, corresponding to $\lambda\in(-a_3,a_2)$;

\item
$1$-sheeted hyperboloids oriented along $x_2$-axis, for $\lambda\in(a_2,a_1)$;

\item
$2$-sheeted hyperboloids, for $\lambda\in(a,+\infty)$ -- these hyperboloids are oriented along $x_3$-axis.
\end{itemize}
In Figure \ref{fig:konfM3}, one non-degenerate quadric of each geometric type is shown.
\begin{figure}[h]
\centering
\includegraphics[width=5.87cm, height=8.38cm]{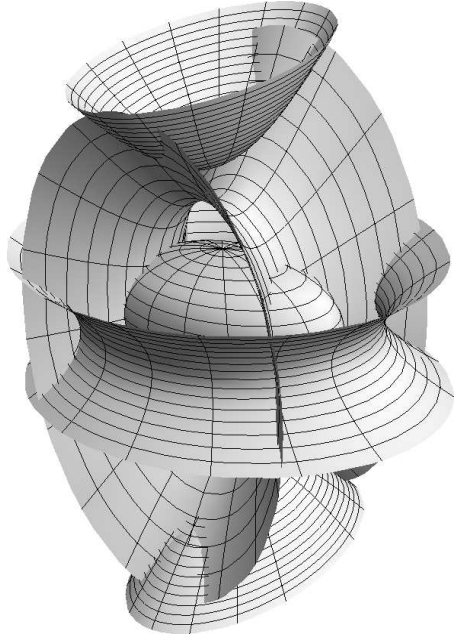}
\caption{Confocal quadrics in the three-dimensional Minkowski space.}\label{fig:konfM3}
\end{figure}
In addition, there are four degenerated quadrics: $\Q_{a_1}$, $\Q_{a_2}$, $\Q_{-a_3}$, $\Q_{\infty}$, that is planes $x_1=0$, $x_2=0$, $x_3=0$, and the plane at the infinity respectively.

The following theorem consists of a generalisation of the Chasles theorem to the Minkowski space and the additional conditions on the parameters of the quadrics touching a given line.
The corresponding generalisation of the Chasles theorem was first obtained in \cite{KhTab2009}, while the classification of the types of confocal quadrics touching a given line for the $3$-dimensional Minkowski space was considered in \cite{GKT2007}, regarding the geodesics on an ellipsoid, and in \cite{DragRadn2012adv} regarding billiards within ellipsoids in the pseudo-Euclidean space of arbitrary dimension.

\begin{theorem}\label{th:parametri.kaustike}
In the Minkowski space $\mathbf{E}^{2,1}$ consider a line intersecting ellipsoid $\E$ \refeq{eq:ellipsoid}.
Then this line is touching two quadrics from \refeq{eq:confocal}.
If we denote their parameters by $\gamma_1$, $\gamma_2$ and take:
	\begin{gather*}
	\{b_1,\ \dots,\ b_p,\ c_1,\ \dots,\ c_q\}=\{a_1,a_2,-a_3, \gamma_1,\gamma_{2}\},\\
	c_q\le\dots\le c_1<0<b_1\le\dots\le b_p,\quad p+q=5,
	\end{gather*}
	we will additionally have:
	\begin{itemize}
		\item
		if the line is space-like, then $p=3$, $q=2$, $a_1=b_3$, $\gamma_1\in\{b_{1},b_{2}\}$ for $1\le i\le k-1$, and $\gamma_{2}\in\{c_{1},c_{2}\}$;
		\item
		if the line is time-like, then $p=4$, $q=1$, $c_1=-a_3$, $\gamma_1\in\{b_{1},b_{2}\}$, $\gamma_2\in\{b_3,b_4\}$;
		\item
		if the line is light-like, then $p=4$, $q=1$, $b_4=\infty=\gamma_2$, $b_{3}=a_1$, $\gamma_1\in\{b_{1},b_{2}\}$, and $c_1=-a_3$.
	\end{itemize}
	Moreover, for each point on $\ell$ inside $\E$, there is exactly $3$ distinct quadrics from \refeq{eq:confocal} containing it.
	More precisely, there is exactly one parameter of these quadrics in each of the intervals:
	$$
	[c_1,0),\ (0,b_1],\ [b_2,b_3].
	$$
\end{theorem}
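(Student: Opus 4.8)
The plan is to turn the incidence condition $X\in\Q_\lambda$ into a real cubic whose three roots are the sought parameters, locate these roots by a sign count at the poles, and then sharpen their location using the tangency of $\ell$ to the caustics $\Q_{\gamma_1},\Q_{\gamma_2}$. Fix an interior point $X=(x_1,x_2,x_3)$ of $\E$ with all coordinates nonzero (points on a coordinate plane $x_i=0$ will be handled separately, as degenerate limits). By \refeq{eq:confocal}, $X\in\Q_\lambda$ is equivalent to
$$
\mathcal{F}(\lambda):=\frac{x_1^2}{a_1-\lambda}+\frac{x_2^2}{a_2-\lambda}+\frac{x_3^2}{a_3+\lambda}-1=0 .
$$
Multiplying by $(a_1-\lambda)(a_2-\lambda)(a_3+\lambda)$ converts this into $N(\lambda)=0$, where $N$ is a polynomial whose leading term is $-\lambda^3$; hence $N$ has exactly three roots $\mu_1,\mu_2,\mu_3$ counted with multiplicity, and $\mathcal{F}(\lambda)=-(\lambda-\mu_1)(\lambda-\mu_2)(\lambda-\mu_3)\big/\big[(a_1-\lambda)(a_2-\lambda)(a_3+\lambda)\big]$.

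For the coarse localisation I would record a few one-sided limits of $\mathcal{F}$ at its simple poles $-a_3<0<a_2<a_1$: $\mathcal{F}\to+\infty$ as $\lambda\to(-a_3)^{+}$ and as $\lambda\to a_2^{-}$ and as $\lambda\to a_1^{-}$, while $\mathcal{F}\to-\infty$ as $\lambda\to a_2^{+}$, and $\mathcal{F}(0)=\frac{x_1^2}{a_1}+\frac{x_2^2}{a_2}+\frac{x_3^2}{a_3}-1<0$ precisely because $X$ lies inside $\E=\Q_0$. The intermediate value theorem then yields one root in each of $(-a_3,0)$, $(0,a_2)$, $(a_2,a_1)$; lying in disjoint open intervals they are distinct, and being three in number they account for all roots of the cubic. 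This already establishes that there are \emph{exactly} three quadrics through $X$ and gives the coarse form of the interval statement.

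The main work is to replace $(-a_3,0),(0,a_2),(a_2,a_1)$ by the sharper $[c_1,0),(0,b_1],[b_2,b_3]$, and here I would use that $\ell$ is tangent to $\Q_{\gamma_1}$ and $\Q_{\gamma_2}$. For a caustic parameter $\gamma$ (not a pole), I restrict the incidence function to $\ell=\{P+tv\}$: after clearing the fixed denominator, $t\mapsto\mathcal{F}_{P+tv}(\gamma)$ is a quadratic in $t$ with leading coefficient $\alpha(\gamma)=\frac{v_1^2}{a_1-\gamma}+\frac{v_2^2}{a_2-\gamma}+\frac{v_3^2}{a_3+\gamma}$. Tangency means this quadratic has a double root, so it keeps constant sign along $\ell$; geometrically the whole line lies on one side of the caustic. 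Feeding this constant sign into the factorisation of $\mathcal{F}$ above shows that whichever coordinate $\mu_i$ sits in the coarse interval containing $\gamma$ cannot cross $\gamma$. Applying this to $\gamma_1$ and to $\gamma_2$, and then inserting their positions within the chain $c_q\le\dots\le c_1<0<b_1\le\dots\le b_p$ furnished by the first part of the theorem, upgrades each coarse interval to the asserted sharper one bounded by the caustics and/or the axes, placing exactly one coordinate in each of $[c_1,0)$, $(0,b_1]$, $[b_2,b_3]$.

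The delicate point, and the step I expect to be the main obstacle, is the sign bookkeeping just invoked: one must determine the constant sign $\sign\alpha(\gamma_j)$ at each caustic, uniformly across the space-like, time-like and light-like cases, so that $X$ is forced onto the correct side of $\gamma_j$ (outside the ellipsoidal caustics, and the appropriate sheet-side of the hyperboloidal ones). This is exactly where the trichotomy $\langle v,v\rangle\gtrless 0$ enters, through the asymptotics $\alpha(\lambda)\sim-\langle v,v\rangle/\lambda$ as $\lambda\to\infty$ together with the pole pattern of $\alpha$. The remaining care is with the configurations set aside at the outset, each of which is recovered from the generic case by continuity or by the corresponding $\lambda\to\infty$ limit: points on a coordinate plane $x_i=0$ (a removable pole, i.e. incidence with a degenerate quadric $\Q_{a_1},\Q_{a_2}$ or $\Q_{-a_3}$), coincident caustics, and the light-like value $\gamma_2=\infty=b_4$.
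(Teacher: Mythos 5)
First, a caveat about the comparison you ask for: the paper does not actually prove Theorem \ref{th:parametri.kaustike}. It is stated as imported background, with the Chasles-type part attributed to \cite{KhTab2009} and the classification of caustic types to \cite{GKT2007} and \cite{DragRadn2012adv}. So there is no in-paper argument to measure your proposal against; I can only assess it on its own terms.

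On those terms, the proposal is the right classical route (Jacobi--Chasles), and its first half is sound: clearing denominators in $\mathcal{F}(\lambda)=0$ gives a cubic, the sign pattern at the poles $-a_3,a_2,a_1$ and at $\lambda=0$ places one root in each of $(-a_3,0)$, $(0,a_2)$, $(a_2,a_1)$, and the constant-sign-of-a-tangent-quadratic argument correctly shows that the elliptic coordinate sharing a coarse interval with a caustic parameter $\gamma_j$ cannot cross it. That machinery does yield the ``Moreover'' clause once everything else is in place. The genuine gap is that everything else is precisely what you defer. (a) You assume the line touches exactly two quadrics of \eqref{eq:confocal}; this needs an argument, e.g.\ that the discriminant $\beta^2(\lambda)-\alpha(\lambda)\delta(\lambda)$ of the restricted quadratic, multiplied by $(a_1-\lambda)(a_2-\lambda)(a_3+\lambda)$, is a polynomial of degree $2$ with leading coefficient $-\langle v,v\rangle$ (the double poles cancel), which also immediately explains why one caustic escapes to infinity in the light-like case. (b) The three bullet points --- the correlation between the causal type of $\ell$ and the signs and positions of $\gamma_1,\gamma_2$ relative to $-a_3$, $a_2$, $a_1$ --- are the actual content of the theorem, and your sketch explicitly labels the required sign bookkeeping ``the main obstacle'' without carrying it out. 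Some of it is cheap: since the line crosses $\E=\Q_0$ transversally, the discriminant polynomial is positive at $\lambda=0$, so $\sign(\gamma_1\gamma_2)=-\sign\langle v,v\rangle$, which gives ``one positive, one negative'' for space-like lines and degree drop for light-like ones. But for time-like lines this only says the two parameters have equal sign, and ruling out ``both negative'', as well as pinning $\gamma_1$ against $a_2$ and $\gamma_2$ against $-a_3$ in the various subcases, requires evaluating the discriminant polynomial at the branch points --- none of which is done. As it stands the proposal proves the point-coordinate statement modulo the caustic classification, and only gestures at the classification itself.
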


\begin{remark}\label{rem:intersection}
 Since $[c_1,b_1]\subset[-a_3,a_2]$ and $[b_2,b_3]\subset[a_2,a_1]$, any given point within $\E$ lies at the intersection of two ellipsoids and one $1$-sheeted hyperboloid oriented along $x_2$-axis.
\end{remark}

For each quadric, its \emph{tropic curves} are the sets of points where the induced metric on the tangent plane is degenerate.
An ellipsoid is divided by its tropic curves into three connected components: two of them are ``polar caps'' mutually symmetric with respect to the $x_1x_2$-plane, while the the third one is the ``equatorial'' annulus placed between them, see Figure \ref{fig:tropic}.
\begin{figure}[h]
\centering
\includegraphics[width=8.43cm, height=4.14cm]{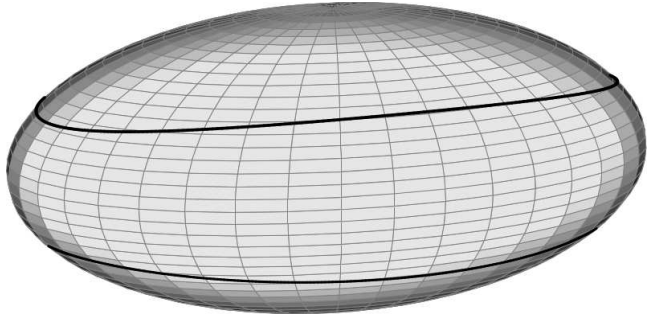}
\caption{Tropic curves on ellipsoid.}\label{fig:tropic}
\end{figure}
Notice that the induced metric within each ``polar cap'' is Reimannian, while it is Lorentzian in the annulus between the tropic curves \cite{GKT2007}.

\begin{remark}
From Theorem \ref{th:parametri.kaustike} and Remark \ref{rem:intersection}, we have that a given point $M$ within $\E$ lies at the intersection of three quadrics
$\Q_{\lambda_1}$, $\Q_{\lambda_2}$, $\Q_{\lambda_3}$, $\lambda_1<\lambda_2<\lambda_3$.
It is proved in \cite{DragRadn2012adv} that $M$ will belong to a ``polar cap'' of ellipsoid $\Q_{\lambda_1}$, and to the ``equatorial belt'' annulus of ellipsoid $\Q_{\lambda_2}$, with $-c\le\lambda_1<0<\lambda_2\le b$.

The triple $(\lambda_1,\lambda_2,\lambda_3)$ represents \emph{generalised elliptic coordinates} of $M$.
\end{remark}

\paragraph*{Billiards in the Minkowski space.}

Let $v$ be a vector and $\gamma$ a hyper-plane in the Minkowski space. Decompose vector $v$ into the sum
$v=a+n_{\gamma}$ of a vector $n_{\gamma}$ orthogonal to $\gamma$ and
$a$ belonging to $\gamma$. Then vector $v'=a-n_{\gamma}$ is
\emph{the billiard reflection} of $v$ on $\gamma$.
It is easy to see that then $v$ is also the billiard reflection of $v'$ with respect to
$\gamma$.


Note that $v=v'$ if $v$ is contained in $\gamma$ and $v'=-v$ if it
is orthogonal to $\gamma$. If $n_{\gamma}$ is light-like, which
means that it belongs to $\gamma$, then the reflection is not
defined.

Line $\ell'$ is a billiard reflection of $\ell$ off a smooth surface
$\pazocal{S}$ if their intersection point $\ell\cap\ell'$ belongs to $\pazocal{S}$
and the vectors of $\ell$, $\ell'$ are reflections of each other with respect to the tangent plane of $\pazocal{S}$ at this point.

\begin{remark}\label{remark:type}
It can be seen directly from the definition of reflection that the type of line is preserved by the billiard reflection.
Thus, the lines containing segments of a given billiard trajectory within $\pazocal{S}$ are all of the same type: they are all either space-like, time-like, or light-like.
\end{remark}

If $\pazocal{S}$ is an ellipsoid,
then it is possible to extend the reflection mapping to those points
where the tangent planes contain the orthogonal vectors. At such
points, a vector reflects into the opposite one, i.e.~$v'=-v$ and
$\ell'=\ell$. For the explanation, see \cite{KhTab2009}.
As follows from the explanation given there, it is natural to consider each such reflection as two reflections: one reflection off the ``polar cap'' and one off the ``equatorial belt''.

The following version of the Chasles' theorem holds for billiards within ellipsoids in the Minkowski space:

\begin{theorem}[\cite{KhTab2009}]
\label{th:chasles}
In the Minkowski space $\mathbf{E}^{2,1}$, consider a billiard trajectory within ellipsoid $\E$.
Then each segment of that trajectory is touching the same pair of quadrics confocal with $\E$.
\end{theorem}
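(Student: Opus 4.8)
The plan is to prove the Chasles theorem for billiards by showing that the property of touching a fixed confocal quadric is preserved under a single billiard reflection off the ellipsoid $\E$, and then to invoke Theorem \ref{th:parametri.kaustike} to conclude that the full pair of caustics is conserved. First I would set up the reflection at a point $M$ on $\E$: let $\ell$ be an incoming segment with unit-type vector $v$, let $\ell'$ be the reflected segment with vector $v'$, and let $n$ denote the Minkowski-orthogonal vector to the tangent plane $T_M\E$. By the definition of billiard reflection in $\mathbf{E}^{2,1}$, we have the decomposition $v=a+n_\gamma$, $v'=a-n_\gamma$ with $a\in T_M\E$, so that $v+v'=2a\in T_M\E$ and $v-v'=2n_\gamma$ is Minkowski-orthogonal to $\E$ at $M$.

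The key step is to show that if $\ell$ is tangent to a confocal quadric $\Q_\lambda$, then so is $\ell'$. By Theorem \ref{th:parametri.kaustike}, the line $\ell$ touches exactly two quadrics with parameters $\gamma_1,\gamma_2$; the claim is that these same two parameters serve as caustics for $\ell'$. I would express tangency of a line through $M$ with direction $w$ to $\Q_\lambda$ by the standard quadratic condition, namely that the discriminant of the confocal quadratic form restricted to the line vanishes. Concretely, writing the Minkowski scalar product via the diagonal form $J=\diag(1,1,-1)$ and letting $A_\lambda$ be the symmetric matrix defining $\Q_\lambda$, the tangency of the line $\{M+tw\}$ to $\Q_\lambda$ is an algebraic relation in $\langle A_\lambda M, M\rangle$, $\langle A_\lambda M, w\rangle$, and $\langle A_\lambda w, w\rangle$. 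The heart of the matter is the reflection identity: the symmetry $v\mapsto v'$ across the tangent plane, combined with the fact that the Minkowski-gradient of $\E$ at $M$ is collinear with the normal $n_\gamma$, forces the relevant tangency functional evaluated on $v$ to equal that evaluated on $v'$. This is where the confocality is used essentially, through the identity relating the gradients of $\E$ and of $\Q_\lambda$ at the common point $M$.

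I would carry out the computation as follows: parametrise the reflection by $v'=v-2\,\frac{\langle v,n\rangle}{\langle n,n\rangle}\,n$, valid whenever $n$ is not light-like (the degenerate case being handled separately by the extended reflection rule $v'=-v$, for which the conclusion is trivial since $\ell'=\ell$). Substituting this expression into the tangency functional for $\Q_\lambda$ and using that $n$ is proportional to the $J$-gradient of $\E$ at $M$, together with $\langle A_\lambda M,M\rangle$ being constrained by $M\in\E$, the cross terms organise so that the tangency condition for $v'$ reduces identically to that for $v$. The algebraic mechanism is the classical one: the difference of the two tangency functionals is a multiple of $\langle v,n\rangle$ times a factor that vanishes by confocality.

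The main obstacle I anticipate is twofold. First, the indefinite signature of the Minkowski metric means one must track the type of the line and the nondegeneracy of $n$ carefully; the light-like case and the tropic-curve points (where $n_\gamma$ becomes light-like) require the separate treatment mentioned in the excerpt, treating a reflection at such a point as a pair of reflections. Second, once reflection-invariance of a single caustic is established, I must argue that \emph{both} caustics are simultaneously preserved, not merely that the unordered pair persists; here I would appeal to Theorem \ref{th:parametri.kaustike}, which pins down exactly which intervals the two tangency parameters occupy according to the type of the line, and to Remark \ref{remark:type}, which guarantees that the type is unchanged by reflection. Since the type is invariant and the interval placement of $\gamma_1,\gamma_2$ is determined by the type, the two parameters are individually conserved, and an induction over successive segments then yields the theorem for the entire trajectory.
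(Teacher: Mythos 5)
The paper does not prove this statement: Theorem \ref{th:chasles} is imported verbatim from \cite{KhTab2009}, so there is no internal proof to compare yours against. Judged on its own terms, your outline follows the standard route (reduce to a single reflection, show tangency to each confocal quadric is preserved, handle the degenerate normals separately, then conclude via Theorem \ref{th:parametri.kaustike} that the pair of tangency parameters is the same for consecutive segments). That is indeed the right architecture, and your reduction of the light-like-normal case to the extended rule $v'=-v$ is consistent with the paper's conventions.

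The gap is that the pivotal step --- invariance of the tangency condition under the Minkowski reflection --- is asserted rather than established, and the mechanism you describe for it is not right as stated. You write that the cancellation uses ``the identity relating the gradients of $\E$ and of $\Q_\lambda$ at the common point $M$,'' but $M$ lies on $\E$ and generically \emph{not} on the caustic $\Q_\lambda$, so there is no common point and no such gradient identity to invoke. Likewise, if one writes the tangency of $\{M+tw\}$ to $\Q_\lambda$ as the vanishing of the discriminant $\Phi_\lambda(w)=\langle B_\lambda M,w\rangle^2-\langle B_\lambda w,w\rangle(\langle B_\lambda M,M\rangle-1)$ (Euclidean pairings, $B_\lambda$ the matrix of $\Q_\lambda$) and substitutes $v'=v-2\frac{\langle v,n\rangle}{\langle n,n\rangle}n$ with $n=J\,A M$, the difference $\Phi_\lambda(v')-\Phi_\lambda(v)$ does not visibly factor as $\langle v,n\rangle$ times a quantity vanishing by confocality; checking that the \emph{zero set in $\lambda$} is preserved is exactly the content of the theorem, and it is the part you have left as ``the classical algebraic mechanism.'' The known proofs (Khesin--Tabachnikov, and Moser's argument in the Euclidean case) instead exhibit explicit first integrals of Joachimsthal type, $F_i(x,v)=v_i^2+\sum_{j\neq i}\frac{(x_iv_j-x_jv_i)^2}{a_i-a_j}$ suitably adapted to the signature, verify directly that each $F_i$ is unchanged by the reflection, and then observe that the tangency parameters $\gamma_1,\gamma_2$ are roots of an algebraic equation whose coefficients are functions of the $F_i$ alone; preservation of the pair follows. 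Until you either carry out the discriminant computation in full or switch to the integrals-of-motion route, the proof is incomplete. (Your final worry about distinguishing $\gamma_1$ from $\gamma_2$ individually is, by contrast, unnecessary for this statement: the theorem only claims the unordered pair is the same, and since by Theorem \ref{th:parametri.kaustike} each line touches exactly two confocal quadrics, preservation of tangency to each one already forces the pairs to coincide.)
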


The two quadrics from Theorem \ref{th:chasles} are called \emph{the caustics} of the trajectory.

\section{Periodic trajectories}\label{sec:algebro}

We will prove now the generalisation of the Poncelet theorem for the $3$-dimensional Minkowski space.
This proof is in the spirit of classical works of Jacobi and Darboux (see, for example \cite{JacobiGW,Darboux1870}), and also resembles to the proof of a Poncelet theorem for light-like geodesics on a quadric in the Minkowski space from \cite{GKT2007}.

\begin{theorem}\label{th:poncelet}
In the Minkowski space $\mathbf{E}^{2,1}$, consider an $n$-periodic billiard trajectory within ellipsoid $\E$.
Denote $n=m_1+n_1$, where $m_1$ is the total number of reflections off the ``polar caps'', and $n_1$ the number of reflections off the ``equatorial belt'' of $\E$ along the trajectory.
Then each billiard trajectory within $\E$ sharing the same pair of caustics is also $n$-periodic, with $m_1$ and $n_1$ reflections off the ``polar caps'' and ``equatorial belt'' respectively.
\end{theorem}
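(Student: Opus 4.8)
The plan is to reduce the three-dimensional Poncelet-type statement to a standard argument on the Abel–Jacobi map associated with the hyperelliptic curve attached to the confocal family. Concretely, I would first set up the generalised elliptic coordinates $(\lambda_1,\lambda_2,\lambda_3)$ from Theorem \ref{th:parametri.kaustike} as functions along a billiard trajectory. Since each segment touches the same two caustics $\Q_{\gamma_1}$, $\Q_{\gamma_2}$, these are constants of motion, and the evolution of the remaining elliptic coordinates is governed by the curve
\begin{equation*}
\Curve:\quad y^2 = (a_1-\lambda)(a_2-\lambda)(a_3+\lambda)(\gamma_1-\lambda)(\gamma_2-\lambda),
\end{equation*}
so that the motion linearises on the Jacobian of $\Curve$. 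The three coordinates $\lambda_1,\lambda_2,\lambda_3$ range over the three intervals described in the theorem, and along a trajectory each one oscillates monotonically between the endpoints of its interval (the turning points being exactly the values where the line touches a caustic or crosses a degenerate quadric).

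Next I would express each reflection and each free segment as a fixed shift on $\Jac(\Curve)$ by a vector determined only by the caustics, not by the initial point. The key computation, following Jacobi and Darboux, is that integrating the Abel differentials $\frac{\lambda^{j}\,d\lambda}{y}$ (for $j=0,1$) along one segment between consecutive contact points with the caustics produces an increment in the Abel–Jacobi image that is independent of which segment one is on. This is the heart of the argument: the change in the Abel sum over one billiard bounce equals a universal divisor class $D_0$ depending only on $\gamma_1,\gamma_2$ and the branch points $a_1,a_2,-a_3$. Summing over the whole trajectory, $n$-periodicity is then equivalent to the single relation $n\,D_0 \equiv 0$ in $\Jac(\Curve)$, which is a condition on the caustics alone and therefore shared by every trajectory with the same caustics.

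To recover not just the total period $n$ but the decomposition $n=m_1+n_1$, I would track the coordinate $\lambda_1$ separately: reflections off a ``polar cap'' versus the ``equatorial belt'' correspond to whether the trajectory's contact point lies in the Riemannian or Lorentzian region, which by the preceding Remark is detected by the interval in which $\lambda_1$ resides (governed by the sign conditions $-a_3\le\lambda_1<0<\lambda_2$). Because the qualitative itinerary of $\lambda_1$ through its interval is dictated by the same periodicity vector $D_0$ and the same turning points, two trajectories sharing the caustics realise the identical sequence of crossings of the tropic curves; hence the counts $m_1$ and $n_1$ coincide. I would phrase this as: the number of half-periods of $\lambda_1$ in a full period is determined by $n\,D_0\equiv 0$, and each type of reflection corresponds to a prescribed phase of $\lambda_1$, so both the total and the split are invariants of the caustics.

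The main obstacle I anticipate is the careful bookkeeping of signs and sheets of $\Curve$ across reflections, especially at the extended reflections where the tangent plane contains the orthogonal direction: there a single geometric bounce must be counted, as the excerpt notes, as two reflections (one off each region), and I must verify that the Abel–Jacobi increment handles this degenerate case consistently, gluing the two half-contributions into the universal shift $D_0$ without double-counting or sign errors. Making the identification of $m_1$ and $n_1$ rigorous, rather than merely the total $n$, thus hinges on controlling exactly how trajectories pass through the tropic curves, and this is where the Minkowski setting genuinely differs from the Euclidean Poncelet theorem.
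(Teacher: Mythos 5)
Your overall framework --- linearisation of the chord dynamics on the Jacobian of the genus-two curve $\Curve$ via the Abel differentials, in the spirit of Jacobi and Darboux --- is the same as the paper's. However, the central mechanism you propose has a genuine gap. First, the increment of the Abel sums along a single segment is \emph{zero}: the differential equations of a line tangent to $\Q_{\gamma_1}$ and $\Q_{\gamma_2}$ say precisely that $\sum_i \lambda_i^k\,d\lambda_i/\sqrt{\Pol(\lambda_i)}=0$ for $k=0,1$, so nothing nontrivial accumulates ``between consecutive contact points with the caustics''; the nontrivial contributions come from the turning points of the three elliptic coordinates at the endpoints of their intervals $[c_1,0]$, $[0,b_1]$, $[b_2,b_3]$. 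Second, and more importantly, these turning points are of three different kinds, producing three \emph{a priori independent} winding numbers: $m_1$ (traversals of $[c_1,0]$ by $\lambda_1$, i.e.\ polar-cap reflections at $\lambda_1=0$), $n_1$ (traversals of $[0,b_1]$ by $\lambda_2$, i.e.\ equatorial-belt reflections at $\lambda_2=0$), and $n_2$ (traversals of $[b_2,b_3]$ by $\lambda_3$, which never meets the boundary at all). The closure condition is therefore the relation \refeq{eq:darboux}, equivalently the divisor relation \refeq{eq:divisor}, $m_1(P_0-P_{c_1})+n_1(P_0-P_{b_1})+n_2(P_{b_2}-P_{b_3})\sim 0$, and \emph{not} a single relation $nD_0\equiv0$ for a universal per-bounce class $D_0$. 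Your formulation would let $m_1$ and $n_1$ enter only through their sum $n$; but since $P_{c_1}$ and $P_{b_1}$ are distinct Weierstrass points, the parities of $m_1$, $n_1$, $n_2$ contribute distinct $2$-torsion classes, and it is exactly this information that separates the cases of Theorem \ref{th:divisor-conditions}. A ``universal shift'' argument cannot see it, and the sign/sheet bookkeeping you defer to the end is not a technicality but the substance of the step.

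A smaller, related confusion: whether a reflection is off a polar cap or off the equatorial belt is detected by \emph{which} coordinate vanishes ($\lambda_1=0$ versus $\lambda_2=0$), not by ``the interval in which $\lambda_1$ resides''; and a hit on a tropic curve ($\lambda_1=\lambda_2=0$) is counted as one reflection of each kind. Once these points are fixed, your plan closes up the way the paper's proof does: integrate the two Abel differentials over the entire closed trajectory, note that each traversal of a given interval contributes the same integral regardless of which segment it occurs on, and obtain the relation \refeq{eq:darboux}, which depends only on the caustics and on the counts $(m_1,n_1,n_2)$ --- hence every trajectory with the same caustics closes with the same counts. So the gap is not in the choice of method but in the incorrect reduction to a single universal translation on $\Jac(\Curve)$.
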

\begin{proof}
The differential equations in the elliptic coordinates of the lines touching two given quadrics $\Q_{\gamma_1}$ and $\Q_{\gamma_2}$ from \refeq{eq:confocal} are:
$$
\sum_{i=1}^3
\frac{d\lambda_i}{\sqrt{\Pol(\lambda_i)}}
=0,
\quad
\sum_{i=1}^3
\frac{\lambda_id\lambda_i}{\sqrt{\Pol(\lambda_i)}}
=0,
$$
with
\begin{equation}\label{eq:polynomial}
\Pol(x)=\varepsilon(a_1-x)(a_2-x)(a_3+x)(\gamma_1-x)(\gamma_2-x),
\quad
\varepsilon=\sign(\gamma_1\gamma_2).
\end{equation}
Introduce constants $b_1$, \dots, $b_p$, $c_1$, \dots, $c_q$ as in Theorem \ref{th:parametri.kaustike}.

Along billiard trajectory, each of the elliptic coordinates $\lambda_1$, $\lambda_2$, $\lambda_3$ takes values in segments $[c_1,0]$, $[0,b_1]$, $[b_2,b_3]$ respectively, with local extrema being only the end-points of the segments.

The value $\lambda_1=0$ corresponds to the reflection off a ``polar cap'' of $\E$, and $\lambda_2=0$ to the reflection off the ``equatorial belt''.
If $\lambda_1=\lambda_2=0$, then that corresponds to hitting the tropic curve, which will be counted as two reflections -- one off the ``polar cap'' and one off the ``equatorial belt''.
Whenever one of the elliptic coordinates takes value $a_1$, $a_2$, $-a_3$, the particle is crossing the coordinate plane $x_1=0$, $x_2=0$, $x_3=0$ respectively.
Values $\gamma_1$, $\gamma_2$ correspond to touching points with the caustics.

Similarly as in \cite{DarbouxSUR}, see also \cite{DragRadn2004}, integrating along the periodic trajectory gives:
\begin{equation}\label{eq:darboux}
m_1\int_0^{c_1}\frac{\lambda_1^k d\lambda_1}{\sqrt{\Pol(\lambda_1)}}
+
n_1\int_0^{b_1}\frac{\lambda_2^k d\lambda_2}{\sqrt{\Pol(\lambda_2)}}
-
n_2\int_{b_2}^{b_3}\frac{\lambda_3^k d\lambda_3}{\sqrt{\Pol(\lambda_3)}}
=
0,
\quad
k\in\{0,1\},
\end{equation}
where $n_2$ is the number of times $\lambda_3$ traced the segment $[b_2,b_3]$ along the trajectory.
Since these relations do not depend on the initial point, each trajectory with the same caustics will become closed after $\lambda_1$, $\lambda_2$, $\lambda_3$ traced the corresponding segments $m_1$, $n_1$, $n_2$ times respectively.
\end{proof}

We will denote the underlying hyper-elliptic curve as:
\begin{equation}\label{eq:curve}
 \Curve\ :\ y^2=\Pol(x),
\end{equation}
with $\Pol(x)$ given by \refeq{eq:polynomial}.
The Weierstrass point on $\Curve$ corresponding to the value $x=\xi$, $\xi\in\{\gamma_1,\gamma_2,a_1,a_2,-a_3,\infty\}$ will be denoted by $P_{\xi}$.
One of the points corresponding to $x=0$ will be denoted by $P_0$.

We note that relation \refeq{eq:darboux} implies the following equivalence on the Jacobian of the hyper-elliptic curve $\Curve$:
\begin{equation}\label{eq:divisor}
 m_1(P_0-P_{c_1})+n_1(P_0-P_{b_1})+n_2(P_{b_2}-P_{b_3})\sim0.
\end{equation}

In the following theorem, we present a detailed algebro-geometric characterisation of periodic trajectories whenever the curve $\Curve$ is non-singular.

\begin{theorem}[Algebro-geometric conditions for periodicity]
	\label{th:divisor-conditions}
Consider a billiard trajectory within ellipsoid $\E$ in the Minkowski space $\mathbf{E}^{2,1}$,
with non-degenerate distinct caustics by $\Q_{\gamma_1}$ and $\Q_{\gamma_2}$.
Then the trajectory is $n$-periodic if and only if one of the following is satisfied:
\begin{itemize}
\item The trajectory is space-like, and
\begin{itemize}
\item[(S1)]
both caustics are ellipsoids and either:
\begin{itemize}
\item
$n$ is even, and the divisor $nP_0$ equivalent to one of $nP_{\infty}$, $(n-2)P_{\infty}+P_{\gamma_1}+P_{\gamma_2}$ on the Jacobian of the curve $\Curve$; or
\item
$n$ is odd, and the divisor $nP_0$ equivalent to one of $(n-1)P_{\infty}+P_{\gamma_1}$, $(n-1)P_{\infty}+P_{\gamma_2}$.
\end{itemize}
\item[(S2)]
$\Q_{\gamma_1}$ is ellipsoid, $\Q_{\gamma_2}$ $1$-sheeted hyperboloid along $x_3$-axis, and either:
\begin{itemize}
 \item $n$ is even and $nP_0\sim nP_{\infty}$; or
 \item $n$ is odd and $nP_0\sim(n-1)P_{\infty}+P_{\gamma_1}$.
\end{itemize}
\item[(S3)]
one caustic is a $1$-sheeted hyperboloid oriented along $x_3$-axis, the other a $1$-sheeted hyperboloid oriented along $x_2$-axis, $n$ is even, and $nP_0\sim nP_{\infty}$.
\item[(S4)]
$\Q_{\gamma_1}$ is ellipsoid, $\Q_{\gamma_2}$ $1$-sheeted hyperboloid oriented along $x_2$-axis, and either:
\begin{itemize}
 \item $n$ is even and $nP_0\sim nP_{\infty}$; or
 \item $n$ is odd and $nP_0\sim(n-1)P_{\infty}+P_{\gamma_2}$.
\end{itemize}
\end{itemize}

\item The trajectory is time-like, and
\begin{itemize}
	\item[(T1)]
	$\Q_{\gamma_1}$ is ellipsoid, $\Q_{\gamma_2}$ $1$-sheeted hyperboloid oriented along $x_2$-axis, and either:
	\begin{itemize}
		\item
		$n$ is even and $nP_0\sim nP_{\infty}$; or
		\item
		$n$ is odd and $nP_0\sim(n-1)P_{\infty}+P_{\gamma_1}$.
	\end{itemize}
	\item[(T2)]
	$\Q_{\gamma_1}$ is ellipsoid, $\Q_{\gamma_2}$ $2$-sheeted hyperboloid along $x_3$-axis, and either:
	\begin{itemize}
		\item $n$ is even and $nP_0\sim nP_{\infty}$; or
		\item $n$ is odd and $nP_0\sim(n-1)P_{\infty}+P_{\gamma_1}$.
	\end{itemize}
	\item[(T3)]
	both caustics are $1$-sheeted hyperboloids oriented along $x_2$-axis, $n$ is even, and the divisor $nP_0$ is equivalent to either $nP_{\infty}$ or $(n-2)P_{\infty}+P_{\gamma_1}+P_{\gamma_2}$.
	\item[(T4)]
	one caustic is $1$-sheeted hyperboloid oriented along $x_2$-axis, the other $2$-sheeted hyperboloid oriented along $x_3$-axis, $n$ is even, and $nP_0\sim nP_{\infty}$.
\end{itemize}

\end{itemize}
\end{theorem}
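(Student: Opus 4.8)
The plan is to start from the divisor equivalence \refeq{eq:divisor}, which the proof of Theorem \ref{th:poncelet} already attaches to any $n$-periodic trajectory, and to convert it, case by case, into the stated relations among the marked points of $\Curve$. The curve \refeq{eq:curve} has genus $2$ and six Weierstrass points $P_{\gamma_1},P_{\gamma_2},P_{a_1},P_{a_2},P_{-a_3},P_\infty$; I would first record the structural relations that drive the whole reduction: for every finite Weierstrass value $\xi$ the function $x-\xi$ yields $2(P_\xi-P_\infty)\sim 0$; the function $x$ yields $P_0+\iota(P_0)\sim 2P_\infty$, where $\iota$ is the hyperelliptic involution; and the function $y$ yields $\sum_\xi(P_\xi-P_\infty)\sim 0$. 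The equivalence \refeq{eq:divisor} encodes the closure of the period integrals \refeq{eq:darboux}, so establishing it is tantamount to $n$-periodicity, and the converse direction is then immediate from Theorem \ref{th:poncelet}, since it suffices that a single trajectory with the given caustics satisfy the condition.

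Next I would fix the combinatorics of each of the eight cases. Using Theorem \ref{th:parametri.kaustike} together with the type-classification following \refeq{eq:confocal}, I would read off, for the given pair of caustic types, which of $a_1,a_2,-a_3,\gamma_1,\gamma_2$ play the roles of $c_1,b_1,b_2,b_3$ in \refeq{eq:divisor}, recording in particular which endpoints are coordinate-plane values ($a_1,a_2,-a_3$) and which are caustic values ($\gamma_1,\gamma_2$). Substituting into \refeq{eq:divisor} and cancelling the $P_\infty$-terms using $n=m_1+n_1$, then reducing modulo $2(P_\xi-P_\infty)\sim 0$, the equivalence collapses to
\begin{equation*}
n(P_0-P_\infty)\ \sim\ \bar m_1\,(P_{c_1}-P_\infty)+\bar n_1\,(P_{b_1}-P_\infty)+\bar n_2\,(P_{b_2}-P_{b_3}),
\end{equation*}
where the bars denote reduction modulo $2$. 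Every term on the right is $2$-torsion, which incidentally shows the condition is independent of the choice of sheet for $P_0$ — consistent with the fact that all right-hand sides listed in the theorem are $2$-torsion classes.

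The substantive step is then the parity analysis, i.e.\ determining $\bar m_1,\bar n_1,\bar n_2$ for the trajectories realisable with a given pair of caustics. I would extract these from the coordinate dynamics described in the proof of Theorem \ref{th:poncelet} — each $\lambda_i$ oscillates within its segment with extrema only at the endpoints — together with the topological constraint that a closed trajectory meets each coordinate hyperplane $x_i=0$ an even number of times, a crossing of $x_i=0$ being exactly the event where an elliptic coordinate attains $a_1,a_2$, or $-a_3$. When an endpoint of a coordinate's segment is a coordinate-plane value this forces the associated count to be even; when the endpoint is a caustic value $\gamma_1$ or $\gamma_2$, at which the trajectory is merely tangent and no $x_i$ changes sign, the corresponding parity is unconstrained and both parities are realised by distinct trajectories. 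Feeding the admissible parity patterns into the reduced relation reproduces the list: where the relevant endpoints are planes the term collapses and $n$ is pinned to even parity (as in (S3), (T4)); where a single caustic endpoint survives one gets the even/odd-$n$ dichotomy with one class $P_{\gamma_i}-P_\infty$ (as in (S2),(S4),(T1),(T2)); in (S1) both lower endpoints $c_1,b_1$ are caustics, freeing $m_1$ and $n_1$ and producing, after grouping by the parity of $n=m_1+n_1$, the four listed classes; and in (T3) the extremal coordinate's two endpoints are caustics, freeing $\bar n_2$ and producing the two even-$n$ alternatives $nP_\infty$ and $(n-2)P_\infty+P_{\gamma_1}+P_{\gamma_2}$.

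The step I expect to be the main obstacle is precisely this parity bookkeeping together with its geometric justification. The algebraic collapse above is routine once the endpoints are identified, but correctly arguing the alternation of turning points — and hence exactly which of $m_1,n_1,n_2$ are pinned to even parity and which are genuinely free — requires care, most delicately in the cases where an extremal coordinate swings between two caustic values, where both parities must be shown to occur so as to produce the two-term alternatives. Two further points of care are the tropic-curve hits, where $\lambda_1=\lambda_2=0$ must be counted as two reflections (one polar, one equatorial) so that $n=m_1+n_1$ stays correct, and the matching of the caustic labels $\gamma_1,\gamma_2$ in the mixed cases, where the assignment by geometric type must be reconciled with the sign convention of Theorem \ref{th:parametri.kaustike}.
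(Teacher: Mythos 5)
Your proposal follows essentially the same route as the paper's proof: a case-by-case identification of the endpoints $c_1,b_1,b_2,b_3$ via Theorem \ref{th:parametri.kaustike}, substitution into the divisor relation \refeq{eq:divisor}, and a parity analysis in which counts attached to coordinate-plane values are forced even along a closed trajectory while counts attached to caustic values remain free, followed by reduction using the $2$-torsion relations among Weierstrass points. The argument and the resulting case list match the paper's.
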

\begin{proof}
For a space-like trajectory, according to Theorem \ref{th:parametri.kaustike}, we have $\gamma_2<0<\gamma_1<a_1$, $\gamma_2\in\{c_1,c_2\}$, $\gamma_1\in\{b_1,b_2\}$.
Thus, there are four possibilities of the types of the caustics.

\emph{Case S1 ($\gamma_2=c_1$, $\gamma_1=b_1$):} Both caustics are ellipsoids, $-a_3<\gamma_2<0<\gamma_1<a_2<a_1$.

The algebro-geometric condition \refeq{eq:divisor} in this case is:
$$
m_1(P_0-P_{\gamma_1})+n_1(P_0-P_{\gamma_2})+n_2(P_{a_2}-P_{a_1})\sim0.
$$
$n_2$ is even, since it is the number of times the particle crossed the plane $x_2=0$ along the closed trajectory, so the condition is equivalent to:
$$
nP_0-m_1P_{\gamma_1}-n_1P_{\gamma_2}\sim0.
$$
From there:
$$
nP_0\sim
\begin{cases}
nP_{\infty}, & \text{if } m_1\text{ and }n_1\text{ are even};
\\
(n-2)P_{\infty}+P_{\gamma_1}+P_{\gamma_2}, & \text{if } m_1\text{ and }n_1\text{ are odd};
\\
(n-1)P_{\infty}+P_{\gamma_1}, & \text{if } m_1\text{ is odd and }n_1\text{ even};
\\
(n-1)P_{\infty}+P_{\gamma_2}, & \text{if } m_1\text{ is even and }n_1\text{ odd}.
\end{cases}
$$

\emph{Case S2 ($\gamma_2=c_2$, $\gamma_1=b_1$):} One caustic is an ellipsoid, and the other $1$-sheeted hyperboloid along $x_3$-axis,
$\gamma_2<-a_3<0<\gamma_1<a_2<a_1$.

The algebro geometric condition \refeq{eq:divisor} for $n$-periodicity is:
$$
m_1(P_0-P_{-a_3})+n_1(P_0-P_{\gamma_1})+n_2(P_{a_2}-P_{a_1})\sim0.
$$
In this case, $m_1$ and $n_2$ must be even, so $n$ and $n_1$ are of the same parity.
Thus
$$
nP_0\sim
\begin{cases}
nP_{\infty}, & \text{if } n\text{ is even};
\\
(n-1)P_{\infty}+P_{\gamma_1}, & \text{if } n\text{ is odd}.
\end{cases}
$$

\emph{Case S3 ($\gamma_2=c_2$, $\gamma_1=b_2$):} One caustic is a $1$-sheeted hyperboloid oriented along $x_3$-axis, and the other a $1$-sheeted hyperboloid oriented along $x_2$-axis:
$\gamma_2<-a_3<0<a_2<\gamma_1<a_1$.

The algebro geometric condition for $n$-periodicity is:
$$
m_1(P_0-P_{-a_3})+n_1(P_0-P_{a_2})+n_2(P_{\gamma_1}-P_{a_1})\sim0,
$$
where $m_1$, $n_1$, $n_2$ are all even, which implies
$nP_0\sim nP_{\infty}$.

\emph{Case S4 ($\gamma_2=c_1$, $\gamma_1=b_2$):} The caustics are a $1$-sheeted hyperboloid oriented along $x_2$-axis and an ellipsoid:
$-a_3<\gamma_2<0<a_2<\gamma_1<a_1$.

The algebro geometric condition for $n$-periodicity is:
$$
m_1(P_0-P_{\gamma_2})+n_1(P_0-P_{a_2})+n_2(P_{\gamma_1}-P_{a_1})\sim0,
$$
with even $n_1$, $n_2$, so $n$ and $m_1$ are of the same parity.
From there we get:
$$
nP_0\sim
\begin{cases}
nP_{\infty},&\text{if }n\text{ is even};
\\
(n-1)P_{\infty}+P_{\gamma_2},&\text{if }n\text{ is odd}.
\end{cases}
$$

For a time-like trajectory, Theorem \ref{th:parametri.kaustike} gives $0<\gamma_1<\gamma_2$,  $\gamma_1\in\{b_1,b_2\}$, $\gamma_2\in\{b_3,b_4\}$.
Again, there are four possibilites for the types of the caustics.

\emph{Case T1 ($\gamma_1=b_1$, $\gamma_2=b_3$):}
One caustic is ellipsoid, the other is $1$-sheeted hyperboloid oriented along $x_2$-axis,
$-a_3<0<\gamma_1<a_2<\gamma_2<a_1$.

The algebro geometric condition for $n$-periodicity is:
$$
m_1(P_0-P_{-a_3})+n_1(P_0-P_{\gamma_1})+n_2(P_{a_2}-P_{\gamma_2})\sim0,
$$
where $m_1$ and $n_2$ must be even, so $n$ and $n_1$ are of the same parity.
Thus this is equivalent to
$nP_0-n_1P_{\gamma_1}-n_2P_{\gamma_2}\sim0$,
i.e.
$$
nP_0\sim
\begin{cases}
nP_{\infty}, &\text{if }n\text{ is even};
\\
(n-1)P_{\infty}+P_{\gamma_1},&\text{if }n\text{ is odd}.
\end{cases}
$$

\emph{Case T2 ($\gamma_1=b_1$, $\gamma_2=b_4$):} The caustics are an ellipsoid and a $2$-sheeted hyperboloid along $x_3$-axis,
$-a_3<0<\gamma_1<a_2<a_1<\gamma_2$.
This case is done identically as Case T1.

\emph{Case T3:} Both caustics are $1$-sheeted hyperboloids along $y$-axis: $\gamma_1=b_2$, $\gamma_2=b_3$.
Here
$-a_3<0<a_2<\gamma_1<\gamma_2<a_1$.

The algebro geometric condition for $n$-periodicity is:
$$
m_1(P_0-P_{-a_3})+n_1(P_0-P_{a_2})+n_2(P_{\gamma_1}-P_{\gamma_2})\sim0,
$$
where $n_1$, $m_1$ are both even, so $n$ is also even.
We get:
$$
nP_0\sim
\begin{cases}
 nP_{\infty}, &\text{if }n_2\text{ is even};
 \\
 (n-2)P_{\infty}+P_{\gamma_1}+P_{\gamma_2}, &\text{if }n_2\text{ is odd}.
 \end{cases}
$$

\emph{Case T4 ($\gamma_1=b_2$, $\gamma_2=b_4$):} The caustics are a $1$-sheeted hyperboloid along $x_2$-axis and a $2$-sheeted hyperboloid along $x_3$-axis,
$-a_3<0<a_2<\gamma_1<a_1<\gamma_2$.

The algebro geometric condition for $n$-periodicity is:
$$
m_1(P_0-P_{-a_3})+n_1(P_0-P_{a_2})+n_2(P_{\gamma_1}-P_{a_1})\sim0,
$$
where $m_1$, $n_1$, $n_2$ are all even, which than gives $nP_0\sim nP_{\infty}$.
\end{proof}

The analytic Cayley-type conditions for periodic trajectories can be derived from Theorem \ref{th:divisor-conditions} using the next Lemma.

\begin{lemma}\label{lemma:cayley}
Consider a non-singular curve $\Curve$ \refeq{eq:curve}.
Then:
\begin{itemize}
	\item $nP_{0}\sim nP_{\infty}$ for $n$ even if and only if $n\ge6$ and
	$$
	\rank\left(
	\begin{array}{llll}
	A_4 & A_5 &\dots & A_{m+1}\\
	A_5 & A_6 &\dots & A_{m+2}\\
	\dots\\
	A_{m+2}& A_{m+3}&\dots & A_{2m-1}
	\end{array}
	\right)
	<m-2,
	\quad n=2m,
	$$
with $\sqrt{\Pol(x)}=A_0+A_1x+A_2x^2+\dots$;

\item
$nP_0\sim (n-2)P_{\infty}+P_{\gamma_1}+P_{\gamma_2}$ for $n$ even if and only if $n\ge4$ and
$$
\rank\left(
\begin{array}{llll}
B_2 & B_3 &\dots & B_{m}\\
B_3 & B_4 &\dots & B_{m+1}\\
\dots\\
B_{m+1}& B_{m+2}&\dots & B_{2m-1}
\end{array}
\right)
<m-1,
\quad n=2m,
$$
with $\dfrac{\sqrt{\Pol(x)}}{(x-\gamma_1)(x-\gamma_2)}=B_0+B_1x+B_2x^2+\dots$;

\item
$nP_0\sim (n-1)P_{\infty}+P_{\gamma_1}$ for $n$ odd if and only if $n\ge5$ and
$$
\rank\left(
\begin{array}{llll}
C_3 & C_4 &\dots & C_{m+1}\\
C_4 & C_5 &\dots & C_{m+2}\\
\dots\\
C_{m+2}& C_{m+3}&\dots & C_{2m}
\end{array}
\right)
<m-1,
\quad n=2m+1,
$$
with $\dfrac{\sqrt{\Pol(x)}}{x-\gamma_1}=C_0+C_1x+C_2x^2+\dots$;

\item
$nP_0\sim (n-1)P_{\infty}+P_{\gamma_2}$ for $n$ odd if and only if $n\ge5$ and
$$
\rank\left(
\begin{array}{llll}
D_3 & D_4 &\dots & D_{m+1}\\
D_4 & D_5 &\dots & D_{m+2}\\
\dots\\
D_{m+2}& D_{m+3}&\dots & D_{2m}
\end{array}
\right)
<m-1,
\quad n=2m+1,
$$
with $\dfrac{\sqrt{\Pol(x)}}{x-\gamma_2}=D_0+D_1x+D_2x^2+\dots$.

\end{itemize}
\end{lemma}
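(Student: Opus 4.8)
The plan is to reduce each of the four divisor equivalences to the existence of a rational function on $\Curve$ with prescribed poles that vanishes to order $n$ at $P_0$, and then to translate that existence into the stated Hankel rank condition via the Taylor expansion at $P_0$. Throughout I would use that $\Curve$ is non-singular of genus $2$ with a single Weierstrass point $P_\infty$ over $x=\infty$, and that $x=0$ is \emph{not} a branch point (non-degeneracy of the caustics gives $\gamma_1,\gamma_2\neq0$), so that $x$ is a local coordinate at $P_0$.

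First I would recast every relation as $nP_0\sim D$ with $D$ effective of degree $n$ and supported away from $P_0$: here $D$ equals $nP_\infty$, or $(n-2)P_\infty+P_{\gamma_1}+P_{\gamma_2}$, or $(n-1)P_\infty+P_{\gamma_i}$. Since $\deg\mathrm{div}(g)=0$, the pole and zero divisors of any $g\in L(D)$ have equal degree; if moreover $g$ vanishes to order $\ge n$ at $P_0$, then its zero degree is $\ge n$ while its pole degree is $\le\deg D=n$, forcing both to equal $n$ and hence $\mathrm{div}(g)=nP_0-D$. Conversely $nP_0\sim D$ produces such a $g$. Thus $nP_0\sim D$ holds if and only if $L(D)$ contains a nonzero function vanishing to order $\ge n$ at $P_0$.

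Next I would write an explicit basis of $L(D)$. Functions with poles only at $P_\infty$ are spanned by the $x^i$, where $x^i$ has a pole of order $2i$ there; multiplying by $y$ raises the pole order at $P_\infty$ by $2g+1=5$ and introduces simple zeros at the finite Weierstrass points. For $D=nP_\infty$ the basis is $\{x^i:2i\le n\}\cup\{x^jy:2j+5\le n\}$. For the relation with $D=(n-2)P_\infty+P_{\gamma_1}+P_{\gamma_2}$ I replace $y$ by $g=\dfrac{y}{(x-\gamma_1)(x-\gamma_2)}$, whose divisor I compute to be $P_{a_1}+P_{a_2}+P_{-a_3}-P_{\gamma_1}-P_{\gamma_2}-P_\infty$, so that $x^jg$ carries exactly the two allowed simple poles and has pole order $2j+1$ at $P_\infty$; for the odd relations I use $y/(x-\gamma_1)$ and $y/(x-\gamma_2)$, each with a simple pole at the relevant Weierstrass point and pole order $3$ at $P_\infty$. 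In every case Riemann--Roch ($\deg D>2g-2=2$) gives $\dim L(D)=n-1$, matching the listed monomials and fixing the degrees of the factors $P,Q$ in a general element $f=P(x)+Q(x)\,g$.

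Then I would expand at $P_0$. On the chosen branch $y=\sqrt{\Pol(x)}=\sum_k A_k x^k$, so $g$ expands with the coefficients $A_k,B_k,C_k,D_k$ of the statement, and $f$ vanishes to order $n$ precisely when its first $n$ Taylor coefficients vanish. The free polynomial $P$ can be chosen to kill the lowest coefficients identically, so the surviving equations involve only the coefficients of $Q$ and form a homogeneous system whose matrix is Hankel in $A_k$ (resp.\ $B_k,C_k,D_k$); a nonzero $f$ exists iff this system has a solution with $Q\neq0$, i.e.\ iff the rectangular Hankel matrix has rank strictly less than its number of columns. Reversing the order of rows and columns does not change the rank, which reproduces the four displayed conditions. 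The thresholds $n\ge6,4,5,5$ are simply the least $n$ for which a $y$-type generator enters $L(D)$ at all, so that $Q$ may be nonzero; for smaller $n$ any admissible $f$ is a polynomial of degree $<n$ vanishing to order $n$, hence zero, and the equivalence fails. I expect the only genuinely delicate step to be the divisor bookkeeping for the generators with poles at $P_{\gamma_i}$ and the alignment of the Hankel indices with the statement; the rest is the standard linear-algebra passage from a high-order tangency to a rank drop.
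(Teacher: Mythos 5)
Your proposal is correct and follows essentially the same route as the paper: exhibit a basis of $\LL(D)$ for each of the four divisors $D$, observe that $nP_0\sim D$ amounts to a nonzero element of $\LL(D)$ vanishing to order $n$ at $x=0$, and convert the resulting linear system into the rank condition on the Hankel matrix of Taylor coefficients. The extra details you supply (the divisor bookkeeping for $y/((x-\gamma_1)(x-\gamma_2))$, the Riemann--Roch dimension count, and the origin of the thresholds $n\ge6,4,5,5$) are exactly the steps the paper delegates to the cited references, and they check out.
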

\begin{proof}
When $n=2m$ is even, the basis for $\LL(nP_{\infty})$ is:
$$\{1,x,x^2,\dots, x^m, y, xy, \dots, x^{m-3}y\},$$
while
$\LL((n-2)P_{\infty}+P_{\gamma_1}+P_{\gamma_2})$ has basis
$$
\left\{1,x,x^2,\dots,x^{m-1},\frac{y}{(x-\gamma_1)(x-\gamma_2)},
\frac{xy}{(x-\gamma_1)(x-\gamma_2)},
\dots,
\frac{x^{m-2}y}{(x-\gamma_1)(x-\gamma_2)}
\right\}.
$$
When $n=2m+1$, the basis for $\LL\left((n-1)P_{\infty}+P_{\gamma_1}\right)$ is:
$$
\left\{1,x,x^2,\dots,x^m,\frac{y}{x-\gamma_1},
\frac{xy}{x-\gamma_1},
\dots,
\frac{x^{m-2}y}{x-\gamma_1}
\right\}.
$$

In each case, the condition for the divisors equivalence is that there is a linear combination of the basis with a zero of order $n$ at $x=0$, which gives $n$ linear equations for the coefficients.
In order to get a non-trivial solutions, the rank of the system cannot be maximal, which gives the stated conditions, as it was done in \cites{GrifHar1978,DragRadn2011book}.
\end{proof}

Next, we will consider the case when the two caustics coincide: $\gamma_1=\gamma_2$.
Then, the segments of a billiard trajectory within $\E$ are generatrices of the double caustic, which must be a $1$-sheeted hyperboloid oriented along $x_2$-axis.
Such a situation can be considered as a limit of the case T3 from Theorem \ref{th:divisor-conditions}, when $\gamma_2\to\gamma_1$.
The Cayley-type condition for periodicity is thus obtained by taking the limit of the correspondinc analytic condition from Lemma \ref{lemma:cayley}.

\begin{proposition}\label{prop:cayley-double-caustic}
A billiard trajectory within $\E$ with segments on $1$-sheeted hyperboloid $\Q_{\gamma_1}$, which is oriented along $x_2$-axis, is $n$ periodic if and only if $n$ is even and either
\begin{itemize}
	\item
$$
\rank\left(
\begin{array}{llll}
A_4 & A_5 &\dots & A_{m+1}\\
A_5 & A_6 &\dots & A_{m+2}\\
\dots\\
A_{m+2}& A_{m+3}&\dots & A_{2m-1}
\end{array}
\right)
<m-2,
\quad n=2m\ge6,
$$
with
$(\gamma_1-x)\sqrt{(a_1-x)(a_2-x)(a_3+x)}=A_0+A_1x+A_2x^2+\dots$; or

\item
$$
\rank\left(
\begin{array}{llll}
B_2 & B_3 &\dots & B_{m}\\
B_3 & B_4 &\dots & B_{m+1}\\
\dots\\
B_{m+1}& B_{m+2}&\dots & B_{2m-1}
\end{array}
\right)
<m-1,
\quad n=2m\ge4,
$$
with $\dfrac{\sqrt{(a_1-x)(a_2-x)(a_3+x)}}{\gamma_1-x}=B_0+B_1x+B_2x^2+\dots$.
\end{itemize}
\end{proposition}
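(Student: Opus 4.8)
The plan is to obtain the statement as the confluent limit $\gamma_2\to\gamma_1$ of Case T3 of Theorem~\ref{th:divisor-conditions}, the only case in which both caustics are $1$-sheeted hyperboloids oriented along the $x_2$-axis. First I would recall that there the trajectory is necessarily of even period $n$ and is $n$-periodic if and only if $nP_0\sim nP_\infty$ or $nP_0\sim(n-2)P_\infty+P_{\gamma_1}+P_{\gamma_2}$, and that Lemma~\ref{lemma:cayley} rewrites these two divisor relations as the rank conditions for the Hankel matrices built from the Taylor coefficients at $x=0$ of $\sqrt{\Pol(x)}$ and of $\sqrt{\Pol(x)}/\big((x-\gamma_1)(x-\gamma_2)\big)$, respectively. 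Throughout the degeneration $a_2<\gamma_1<a_1$, so $\gamma_1>0$ and $\varepsilon=\sign(\gamma_1\gamma_2)=1$.

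The core computation is to send these two generating functions to the limit. Writing $\rho(x)=(a_1-x)(a_2-x)(a_3+x)$ and setting $\gamma_2=\gamma_1$ in \refeq{eq:polynomial} gives $\Pol(x)=\rho(x)(\gamma_1-x)^2$, hence, on the branch used throughout,
$$
\sqrt{\Pol(x)}=(\gamma_1-x)\sqrt{\rho(x)},
$$
which is exactly the series defining the $A_k$ of the Proposition. For the second function the sign works out,
$$
\frac{\sqrt{\Pol(x)}}{(x-\gamma_1)(x-\gamma_2)}\bigg|_{\gamma_2=\gamma_1}
=\frac{(\gamma_1-x)\sqrt{\rho(x)}}{(x-\gamma_1)^2}
=\frac{\sqrt{\rho(x)}}{\gamma_1-x},
$$
since $(\gamma_1-x)/(x-\gamma_1)^2=1/(\gamma_1-x)$, and this is the series defining the $B_k$. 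Both expansions are legitimate because $\rho(0)=a_1a_2a_3>0$. Thus the two Hankel matrices in the Proposition are literally the $\gamma_2\to\gamma_1$ limits of those in Lemma~\ref{lemma:cayley}, and the thresholds $n\ge6$ and $n\ge4$ are inherited.

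It remains to see that these limiting conditions still characterise periodicity. At $\gamma_2=\gamma_1$ the curve \refeq{eq:curve} acquires a node at $x=\gamma_1$, with normalisation the elliptic curve $\tilde y^2=\rho(x)$ and $y=(\gamma_1-x)\tilde y$; I would re-run the Darboux integration \refeq{eq:darboux} in this limit. For $\lambda_1,\lambda_2$, which stay away from $\gamma_1$, the integrands pass to the third-kind forms $\lambda^k\,d\lambda/\big((\gamma_1-\lambda)\sqrt{\rho(\lambda)}\big)$ on the elliptic curve, while for $\lambda_3$, whose interval $[b_2,b_3]=[\gamma_1,\gamma_2]$ collapses, the rescaling $\lambda_3=\gamma_1+(\gamma_2-\gamma_1)s$ shows the contribution converges to the finite residual $\pi\gamma_1^{k}/\sqrt{|\rho(\gamma_1)|}$. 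Correspondingly, in \refeq{eq:divisor} the term $n_2(P_{b_2}-P_{b_3})$ does not vanish but tends to $n_2\big(P_{\gamma_1}^{+}-P_{\gamma_1}^{-}\big)$, the difference of the two preimages of the node on the normalisation. Since $m_1$ and $n_1$ are even (the particle crosses $x_3=0$ and $x_2=0$ an even number of times, so $n=m_1+n_1$ is even), the two sub-cases $n_2$ even and $n_2$ odd reproduce in the limit the relations $nP_0\sim nP_\infty$ and $nP_0\sim(n-2)P_\infty+P_{\gamma_1}+P_{\gamma_2}$; feeding the expansions above into the $\LL$-space count of Lemma~\ref{lemma:cayley} yields the two stated rank conditions.

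I expect the main obstacle to be exactly this last step, because at $\gamma_2=\gamma_1$ the curve is singular and Lemma~\ref{lemma:cayley}, proved for non-singular $\Curve$, cannot be applied as a black box. The $\LL$-space count has to be redone on the normalisation subject to the node constraint that admissible functions take equal values at $P_{\gamma_1}^{+}$ and $P_{\gamma_1}^{-}$, i.e.\ in the generalised Jacobian of the nodal curve; one must check that the collapse $P_{b_2}-P_{b_3}\to P_{\gamma_1}^{+}-P_{\gamma_1}^{-}$ together with the finite residual is compatible, so that the divisor relation governing periodicity degenerates to precisely the limiting one and not to a spurious simplification, and that imposing the node constraint leaves the ranks of the two Hankel matrices unchanged in the limit, so that no linear combination with a zero of order $n$ at $P_0$ is gained or lost. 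Establishing this stability of the rank under the confluence $\gamma_2\to\gamma_1$ is the crux; granting it, the equivalence follows.
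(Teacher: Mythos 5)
Your proposal follows essentially the same route as the paper: Proposition \ref{prop:cayley-double-caustic} is obtained there precisely as the confluent limit $\gamma_2\to\gamma_1$ of Case T3 of Theorem \ref{th:divisor-conditions} combined with the corresponding rank conditions of Lemma \ref{lemma:cayley}, and your computation of the two limiting generating functions reproduces the paper's $A_k$ and $B_k$ exactly, with the correct thresholds $n\ge6$ and $n\ge4$. The paper offers no justification beyond ``taking the limit,'' so your closing discussion of the nodal degeneration and the generalised Jacobian is extra care that goes beyond, but is fully consistent with, the paper's own treatment.
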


Finally, we will consider light-like trajectories.
Such trajectories can be considered as a limit of Cases S2 and S3 from Theorem \ref{th:divisor-conditions}, when $\gamma_2\to-\infty$, or a limit of Cases T2 and T4, with $\gamma_2\to+\infty$.
The analytic conditions are obtained as the limit of the corresponding conditions from Lemma \ref{lemma:cayley}.

\begin{proposition}\label{prop:cayley-light-like}
A light-like billiard trajectory within $\E$, with non-degenerate caustic $\Q_{\gamma_1}$, is $n$-periodic if and only if
\begin{itemize}
	\item $n$ is even, $n\ge6$, and
		$$
	\rank\left(
	\begin{array}{llll}
	A_4 & A_5 &\dots & A_{m+1}\\
	A_5 & A_6 &\dots & A_{m+2}\\
	\dots\\
	A_{m+2}& A_{m+3}&\dots & A_{2m-1}
	\end{array}
	\right)
	<m-2,
	\quad n=2m,
	$$
	with
	$\sqrt{(a_1-x)(a_2-x)(a_3+x)(\gamma_1-x)}=A_0+A_1x+A_2x^2+\dots$; or	
\item $\Q_{\gamma_1}$ is an ellipsoid, $n$ is odd, $n\ge5$, and
$$
\rank\left(
\begin{array}{llll}
B_3 & B_4 &\dots & B_{m+1}\\
B_4 & B_5 &\dots & B_{m+2}\\
\dots\\
B_{m+2}& B_{m+3}&\dots & B_{2m}
\end{array}
\right)
<m-1,
\quad n=2m+1,
$$
with $\sqrt{\dfrac{{(a_1-x)(a_2-x)(a_3+x)}}{\gamma_1-x}}=B_0+B_1x+B_2x^2+\dots$.\end{itemize}
\end{proposition}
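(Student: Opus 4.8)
The plan is to obtain Proposition \ref{prop:cayley-light-like} as a degeneration of Lemma \ref{lemma:cayley}, exactly as the surrounding text prescribes. A light-like trajectory is the limit of Cases S2 and S3 (as $\gamma_2\to-\infty$) or Cases T2 and T4 (as $\gamma_2\to+\infty$), so the relevant divisor conditions are precisely $nP_0\sim nP_\infty$ for $n$ even and $nP_0\sim(n-1)P_\infty+P_{\gamma_1}$ for $n$ odd (the latter occurring only when $\Q_{\gamma_1}$ is an ellipsoid, matching cases S2 and T1/T2). Thus I would start by tracking what happens to the polynomial $\Pol(x)$ of \refeq{eq:polynomial} as $\gamma_2\to\pm\infty$. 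The defining feature of a light-like line is that $\Q_\infty$ (the plane at infinity) becomes one of the touched quadrics, i.e.\ $\gamma_2=\infty$, per the light-like clause of Theorem \ref{th:parametri.kaustike}. Under the normalisation $\varepsilon=\sign(\gamma_1\gamma_2)$, the factor $(\gamma_2-x)$ in $\Pol(x)$ together with the sign $\varepsilon$ should, after dividing by $|\gamma_2|$ and taking the limit, leave the quartic $(a_1-x)(a_2-x)(a_3+x)(\gamma_1-x)$ that appears inside the square root in the statement. I would make this rescaling precise so that the Taylor coefficients $A_i$ of $\sqrt{\Pol(x)}$ converge to the $A_i$ of $\sqrt{(a_1-x)(a_2-x)(a_3+x)(\gamma_1-x)}$.

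Next I would carry the limit through the two relevant clauses of Lemma \ref{lemma:cayley}. For the even case, the condition $nP_0\sim nP_\infty$ is encoded by the Hankel-type rank condition on the coefficients $A_i$ of $\sqrt{\Pol(x)}$; since those coefficients converge under the rescaling above, the same rank inequality survives verbatim with the limiting expansion of $\sqrt{(a_1-x)(a_2-x)(a_3+x)(\gamma_1-x)}$, giving the first bullet. For the odd case (only for an ellipsoidal caustic $\Q_{\gamma_1}$), the condition $nP_0\sim(n-1)P_\infty+P_{\gamma_1}$ is the third clause of Lemma \ref{lemma:cayley}, governed by the coefficients of $\sqrt{\Pol(x)}/(x-\gamma_1)$. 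In the limit this ratio becomes $\sqrt{(a_1-x)(a_2-x)(a_3+x)(\gamma_1-x)}/(x-\gamma_1)=\sqrt{(a_1-x)(a_2-x)(a_3+x)/(\gamma_1-x)}$ up to sign, matching the $B_i$ of the second bullet; the matrix there is the $C$-matrix of Lemma \ref{lemma:cayley} re-indexed, so the rank bound $<m-1$ with $n=2m+1\ge5$ carries over directly. I would also note that the odd subcase from S4/T1 involving $P_{\gamma_2}$ simply disappears in the limit, since $P_{\gamma_2}\to P_\infty$, consistent with the absence of a separate $\gamma_2$-clause.

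The main obstacle I anticipate is justifying the interchange of the limit $\gamma_2\to\pm\infty$ with the algebro-geometric/analytic equivalence, rather than any single computation. Two points need care. First, as $\gamma_2\to\infty$ the curve $\Curve$ changes degree: one pair of Weierstrass points $P_{\gamma_2}$ (above a finite branch point) migrates to infinity and merges with the behaviour at $P_\infty$, so the genus and the very meaning of the divisor relation must be re-examined on the limiting curve $y^2=(a_1-x)(a_2-x)(a_3+x)(\gamma_1-x)$. I would handle this by arguing at the level of the explicit Taylor coefficients and the Riemann–Roch bases written out in the proof of Lemma \ref{lemma:cayley}: since the rank conditions are stated purely in terms of convergent power-series coefficients, continuity of the determinantal (rank) conditions in these coefficients lets me pass to the limit without re-deriving the bases on the degenerate curve. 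Second, I must confirm the sign normalisation: for the light-like limit $\gamma_1>0$ can be either sign-compatible with an ellipsoid or a hyperboloid caustic, and I would verify that $\varepsilon$ and the branch of the square root are chosen so that the limiting radicands are exactly those displayed, in particular that $(a_1-x)(a_2-x)(a_3+x)/(\gamma_1-x)$ is the correct nonnegative quantity on the relevant coordinate interval. Once these continuity and normalisation points are settled, the two bullets follow immediately by specialising Lemma \ref{lemma:cayley}.
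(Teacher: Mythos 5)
Your proposal follows exactly the paper's route: the paper gives no written proof of this proposition, stating only that the conditions are obtained as the limit $\gamma_2\to\pm\infty$ of Cases S2, S3 (respectively T2, T4) via Lemma \ref{lemma:cayley}, which is precisely your plan. Your additional care about rescaling $\Pol(x)$, the degeneration of the curve, and the continuity of the Hankel rank conditions supplies detail the paper omits; the only quibble is the parenthetical reference to Cases T1/S4, which are not among the light-like limit cases (the odd clause comes from S2 and T2 alone).
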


\section{Polynomial equations}\label{sec:polynomial}

In this section, we express the periodicity conditions as polynomial functional equations.

\begin{lemma}\label{lemma:polynomial}
Consider a non-singular curve $\Curve$ \refeq{eq:curve}. Then:
\begin{itemize}
\item $nP_{0}\sim nP_{\infty}$ for $n=2m$ if and only if $n\ge6$ and there are real polynomials $p_m(s)$ and $q_{m-3}(s)$ of degrees $m$ and $m-3$ respectively such that
$$
p_m^2(s)
-s\left(s-\frac1{a_1}\right)\left(s-\frac1{a_2}\right)\left(s+\frac1{a_3}\right)
\left(s-\frac1{\gamma_1}\right)\left(s-\frac1{\gamma_2}\right)q_{m-3}^2(s)=1;
$$
		
\item
		$nP_0\sim (n-2)P_{\infty}+P_{\gamma_1}+P_{\gamma_2}$ for $n=2m$ even if and only if $n\ge4$ and there are real polynomials $p_{m-1}(s)$ and $q_{m-2}(s)$ of degrees $m-1$ and $m-2$ respectively such that
$$
\left(s-\frac1{\gamma_1}\right)\left(s-\frac1{\gamma_2}\right)p_{m-1}^2(s)
-s\left(s-\frac1{a_1}\right)\left(s-\frac1{a_2}\right)\left(s+\frac1{a_3}\right)
q_{m-2}^2(s)=\varepsilon,
$$
with $\varepsilon=\sign(\gamma_1\gamma_2)$;
		
		\item
		$nP_0\sim (n-1)P_{\infty}+P_{\gamma_1}$ for $n=2m+1$ odd and $\gamma_1>0$ if and only if $n\ge5$ and there are real polynomials $p_{m}(s)$ and $q_{m-2}(s)$ of degrees $m$ and $m-2$ respectively such that
		$$
		\left(s-\frac1{\gamma_1}\right)p_{m}^2(s)
		-s\left(s-\frac1{a_1}\right)\left(s-\frac1{a_2}\right)\left(s+\frac1{a_3}\right)
		\left(s-\frac1{\gamma_2}\right)
		q_{m-2}^2(s)=-1;
		$$		
	
	\item
		$nP_0\sim (n-1)P_{\infty}+P_{\gamma_2}$ for $n=2m+1$ odd and $\gamma_2<0$ if and only if $n\ge5$ and there are real polynomials $p_{m}(s)$ and $q_{m-2}(s)$ of degrees $m$ and $m-2$ respectively such that
		$$
		\left(s-\frac1{\gamma_2}\right)p_{m}^2(s)
		-s\left(s-\frac1{a_1}\right)\left(s-\frac1{a_2}\right)\left(s+\frac1{a_3}\right)
		\left(s-\frac1{\gamma_1}\right)
		q_{m-2}^2(s)=1.
		$$		
\end{itemize}
\end{lemma}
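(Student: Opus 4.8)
The plan is to transfer each divisor equivalence of Lemma~\ref{lemma:cayley} across the birational change of coordinate $s=1/x$, and then to recognise the resulting relation on the transformed curve as the solvability of a Pell-type functional equation. First I would apply the substitution $x=1/s$ to $\Curve$. Writing each factor of $\Pol$ as $a_1-1/s=a_1(s-1/a_1)/s$ and so on, one finds that $\Curve$ is birationally equivalent to the genus-two curve
\[
\widetilde{\Curve}:\ w^2=R(s),\qquad R(s)=s\Bigl(s-\tfrac1{a_1}\Bigr)\Bigl(s-\tfrac1{a_2}\Bigr)\Bigl(s+\tfrac1{a_3}\Bigr)\Bigl(s-\tfrac1{\gamma_1}\Bigr)\Bigl(s-\tfrac1{\gamma_2}\Bigr),
\]
a degree-six polynomial (up to the positive constant $\varepsilon\,a_1a_2a_3\gamma_1\gamma_2=a_1a_2a_3|\gamma_1\gamma_2|>0$). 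Under this map the distinguished points transform as follows: $P_\infty$ (a branch point, since $\deg\Pol=5$) goes to the branch point $Q_0$ over $s=0$; each $P_{\gamma_i}$ goes to the branch point $Q_{\gamma_i}$ over $s=1/\gamma_i$; and $P_0$, one of the two points over $x=0$, goes to one of the two points $E_\infty^{\pm}$ at infinity of $\widetilde{\Curve}$, say $E_\infty^{+}$.

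Next I would rewrite each of the four divisor conditions in the new coordinate and simplify using the hyperelliptic relations. On $\widetilde{\Curve}$ every branch point $B$ satisfies $2B\sim E_\infty^{+}+E_\infty^{-}$, the class of the hyperelliptic pencil; in particular $2Q_0\sim E_\infty^{+}+E_\infty^{-}$. Hence, for $n=2m$,
\[
n\bigl(E_\infty^{+}-Q_0\bigr)=m\bigl(2E_\infty^{+}-2Q_0\bigr)\sim m\bigl(E_\infty^{+}-E_\infty^{-}\bigr),
\]
so the condition $nP_0\sim nP_\infty$ becomes $m(E_\infty^{+}-E_\infty^{-})\sim0$. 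The same manipulation turns $nP_0\sim(n-2)P_\infty+P_{\gamma_1}+P_{\gamma_2}$ into an equivalence between a multiple of $E_\infty^{+}-E_\infty^{-}$ and the branch-point combination $Q_{\gamma_1}+Q_{\gamma_2}$, and likewise for the two odd cases involving a single $Q_{\gamma_i}$.

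The core of the argument is then the classical correspondence between such equivalences and Pell-type equations (cf. \cite{DragRadn2019cmp}). For each item I would construct a function $F=M(s)\,p(s)+q(s)\,w$ on $\widetilde{\Curve}$, where $M$ collects the branch-point factors on the right-hand side ($M=1$; $M=(s-1/\gamma_1)(s-1/\gamma_2)$; $M=s-1/\gamma_1$; or $M=s-1/\gamma_2$, respectively). Since $M\mid R$, its norm is
\[
F\bar F=M^2p^2-q^2R=M\bigl(Mp^2-(R/M)\,q^2\bigr),
\]
and $R/M$ is exactly the polynomial multiplying $q^2$ in the asserted equation. Imposing $Mp^2-(R/M)q^2=c$ for a constant $c$ makes $F\bar F=cM$, so the finite zeros of $F$ occur only at the branch points where $M$ vanishes, while its poles lie only at infinity; together with $\mathrm{div}(\bar F)=\sigma_*\mathrm{div}(F)$ this forces $\mathrm{div}(F)$ to be the anti-invariant combination realising the divisor equivalence derived above. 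Thus a solution of the Pell equation yields the required equivalence, and conversely a function realising the equivalence, expanded as $F=Mp+qw$, produces a polynomial solution. Matching the order of $F$ at $E_\infty^{\pm}$ (using $w\sim\pm s^3$) pins down $\deg p$ and $\deg q$, and the requirement that $q$ have nonnegative degree yields the lower bounds $n\ge6$, $n\ge4$, $n\ge5$, $n\ge5$.

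The step I expect to be the main obstacle is fixing the \emph{sign} of the constant on each right-hand side---$+1$, $\varepsilon=\sign(\gamma_1\gamma_2)$, $-1$, and $+1$---rather than merely a nonzero constant. This sign cannot be removed by rescaling $p,q$ and must be extracted from the real structure: the signs of $\gamma_1,\gamma_2$ (hence of $\varepsilon$ and of the factors $s-1/\gamma_i$), together with the positivity of the leading coefficient forced by the cancellation of the top-degree terms in $F\bar F$. The accompanying bookkeeping---checking that the leading coefficients cancel so that the top-degree terms of $Mp^2$ and $(R/M)q^2$ collapse to a constant, and that this forces $\deg q$ to be exactly the claimed value rather than smaller---is routine once the sign is settled; the hypotheses $\gamma_1>0$ in the third item and $\gamma_2<0$ in the fourth are precisely what guarantee the corresponding sign and the reality of the solution.
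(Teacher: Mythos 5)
Your proposal is correct and follows essentially the same route as the paper: the paper likewise converts each divisor equivalence into the existence of an element $p^*(x)+q^*(x)\sqrt{\Pol(x)}/M(x)$ of the relevant linear system with a zero of order $n$ at $x=0$, takes the norm against the conjugate to force the resulting polynomial to equal $\pm x^{n}$ (the sign being fixed exactly as you predict, by the leading-coefficient cancellation together with the hypotheses $\gamma_1>0$, $\gamma_2<0$ which let the residual constants be absorbed into $p$ and $q$), and then substitutes $s=1/x$. The only difference is one of ordering: you change coordinates first and argue on the transformed genus-two curve, whereas the paper performs the norm computation in the $x$-variable and passes to $s=1/x$ at the very end.
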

\begin{proof}
It is clear from the proof of Lemma \ref{lemma:cayley} that the relation
$2mP_0\sim 2mP_{\infty}$ is satisfied if and only if there are real polynomials $p_m^*(x)$ and $q_{m-3}^*(x)$ such that $p_m^*(x)+q_{m-3}^*(x)\sqrt{\Pol(x)}$ has a zero of multiplicity $2m$ at $x=0$.
Multiplying that expression by $p_m^*(x)-q_{m-3}^*(x)\sqrt{\Pol(x)}$, we get that the polynomial $(p_m^*(x))^2-\Pol(x)(q_{m-3}^*(x))^2$, which is of degree $2m$, has a zero of order $2m$ at $x=0$.
Assuming that $p_m^*$ is monic, we have:
$$
(p_m^*(x))^2-\Pol(x)(q_{m-3}^*(x))^2=x^{2m}.
$$
Dividing by $x^{2m}$ and introducing $s=1/x$, we get the needed relation.

The relation $2mP_0\sim (2m-2)P_{\infty}+P_{\gamma_1}+P_{\gamma_2}$ is satisfied if and only if there are real polynomials $p_{m-1}^*(x)$ and $q_{m-2}^*(x)$ of degrees $m-1$ and $m-2$ such that
$$
p_{m-1}^*(x)+q_{m-2}^*(x)\frac{\sqrt{\Pol(x)}}{(\gamma_1-x)(\gamma_2-x)}
$$
has a zero of order $2m$ at $x=0$.
Multyplying by:
$$
\varepsilon(\gamma_1-x)(\gamma_2-x)
\left(p_{m-1}^*(x)-q_{m-2}^*(x)\frac{\sqrt{\Pol(x)}}{(\gamma_1-x)(\gamma_2-x)}\right),
$$
we get that the polynomial:
$$
\varepsilon(\gamma_1-x)(\gamma_2-x)(p_{m-1}^*(x))^2-(a_1-x)(a_2-x)(a_3+x)(q_{m-2}^*(x))^2,
$$
which is of degree $2m$,
has a zero of order $2m$ at $x=0$.
Thus, it equals $\varepsilon x^{2m}$.
Dividing by $x^{2m}$ and introducing $s=1/x$, we get the stated polynomial relation.

The relation $(2m+1)P_0\sim 2mP_{\infty}+P_{\gamma_1}$ is satisfied if and only if there are real polynomials $p_{m}^*(x)$ and $q_{m-2}^*(x)$ of degrees $m$ and $m-2$ respectively such that
$$
p_{m}^*(x)+q_{m-2}^*(x)\frac{\sqrt{\Pol(x)}}{\gamma_1-x}
$$
has a zero of order $2m+1$ at $x=0$.
Multyplying by:
$$
(\gamma_1-x)
\left(p_{m}^*(x)-q_{m-2}^*(x)\frac{\sqrt{\Pol(x)}}{\gamma_1-x}\right),
$$
we get that the polynomial:
$$
(\gamma_1-x)(p_{m}^*(x))^2-\varepsilon(a_1-x)(a_2-x)(a_3+x)(\gamma_2-x)(q_{m-2}^*(x))^2,
$$
which is of degree $2m+1$,
has a zero of order $2m+1$ at $x=0$.
Assuming that $p_m^*(x)$ is monic, we get that the last expression equals $-x^{2m+1}$.
Note that $\varepsilon=\sign(\gamma_1\gamma_2)=\sign(\gamma_2)$.
Dividing by $x^{2m+1}$, introducing $s=1/x$, we get the stated relation.

The relation $(2m+1)P_0\sim 2mP_{\infty}+P_{\gamma_2}$ is satisfied if and only if there are real polynomials $p_{m}^*(x)$ and $q_{m-2}^*(x)$ of degrees $m$ and $m-2$ respectively such that
$$
p_{m}^*(x)+q_{m-2}^*(x)\frac{\sqrt{\Pol(x)}}{\gamma_2-x}
$$
has a zero of order $2m+1$ at $x=0$.
Multyplying by:
$$
(\gamma_2-x)
\left(p_{m}^*(x)-q_{m-2}^*(x)\frac{\sqrt{\Pol(x)}}{\gamma_2-x}\right),
$$
we get that the polynomial:
$$
(\gamma_2-x)(p_{m}^*(x))^2-\varepsilon(a_1-x)(a_2-x)(a_3+x)(\gamma_1-x)(q_{m-2}^*(x))^2,
$$
which is of degree $2m+1$,
has a zero of order $2m+1$ at $x=0$.
Assuming that $p_m^*(x)$ is monic, we get that the last expression equals $-x^{2m+1}$.
Note that $\varepsilon=\sign(\gamma_1\gamma_2)=-\sign(\gamma_1)$.
Dividing by $-x^{2m+1}$, introducing $s=1/x$, we get the stated relation.
\end{proof}

By taking the appropriate limits, we get the polynomial conditions for the case of a double caustic and the case of light-like trajectories:
\begin{proposition}\label{prop:polynomial-singular}
(a)
A billiard trajectory within $\E$ with segments on $1$-sheeted hyperboloid $\Q_{\gamma_1}$, which is oriented along $x_2$-axis, is $n$ periodic if and only if $n=2m$ is even and either:
\begin{itemize}
	\item $n\ge6$ and there are real polynomials $p_m(s)$ and $q_{m-3}(s)$ of degrees $m$ and $m-3$ respectively such that
	$$
	p_m^2(s)
	-s\left(s-\frac1{a_1}\right)\left(s-\frac1{a_2}\right)\left(s+\frac1{a_3}\right)
	\left(s-\frac1{\gamma_1}\right)^2q_{m-3}^2(s)=1;
	$$
	
	\item
 $n\ge4$ and there are real polynomials $p_{m-1}(s)$ and $q_{m-2}(s)$ of degrees $m-1$ and $m-2$ respectively such that
	$$
	\left(s-\frac1{\gamma_1}\right)^2p_{m-1}^2(s)
	-s\left(s-\frac1{a_1}\right)\left(s-\frac1{a_2}\right)\left(s+\frac1{a_3}\right)
	q_{m-2}^2(s)=1.
	$$
\end{itemize}
(b)
A light-like billiard trajectory within $\E$, with non-degenerate caustic $\Q_{\gamma_1}$, is $n$-periodic if and only if
\begin{itemize}
	\item $n=2m$ is even, $n\ge6$, $\Q_{\gamma_1}$ is an ellipsoid or a $1$-sheeted hyperboloid oriented along $x_2$-axis, and there are real polynomials $p_m(s)$ and $q_{m-3}(s)$ of degrees $m$ and $m-3$ respectively such that
	$$
	p_m^2(s)
	-s^2\left(s-\frac1{a_1}\right)\left(s-\frac1{a_2}\right)\left(s+\frac1{a_3}\right)
	\left(s-\frac1{\gamma_1}\right)q_{m-3}^2(s)=1;
	$$
\item $n=2m+1$ is odd, $n\ge5$, $\Q_{\gamma_1}$ is an ellipsoid, and there are real polynomials $p_{m}(s)$ and $q_{m-2}(s)$ of degrees $m$ and $m-2$ respectively such that
$$
\left(s-\frac1{\gamma_1}\right)p_{m}^2(s)
-s^2\left(s-\frac1{a_1}\right)\left(s-\frac1{a_2}\right)\left(s+\frac1{a_3}\right)
q_{m-2}^2(s)=-1.
$$		

\end{itemize}

\end{proposition}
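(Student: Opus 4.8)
The plan is to obtain both parts as controlled degenerations of Lemma \ref{lemma:polynomial}, along the two limits already identified for the analytic conditions: the double caustic as the limit $\gamma_2\to\gamma_1$ of Case~T3, and the light-like trajectories as the limit $\gamma_2\to\pm\infty$ of Cases~S2,~S3 (resp. T2,~T4). The periodicity in each degenerate situation has already been characterized by the rank conditions of Proposition \ref{prop:cayley-double-caustic} and Proposition \ref{prop:cayley-light-like}; what remains is to convert those rank conditions into the stated Pell-type identities by the same conjugate-multiplication argument used in the proof of Lemma \ref{lemma:polynomial}. That argument is purely algebraic: it only uses that a function $p^*(x)+q^*(x)\sqrt{\Pol(x)}$ in the appropriate Riemann--Roch space has a zero of order $n$ at $x=0$, and multiplies by its conjugate to produce a polynomial of degree $n$ with a zero of order $n$ at the origin, hence a monomial $cx^n$. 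I would therefore carry out this conversion on each limiting curve.

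For part (a), since the double caustic $\Q_{\gamma_1}$ is a one-sheeted hyperboloid along the $x_2$-axis we have $\gamma_1>0$, so $\varepsilon=\sign(\gamma_1^2)=1$ and $\sqrt{\Pol(x)}=(\gamma_1-x)\sqrt{(a_1-x)(a_2-x)(a_3+x)}$; a perfect square factors out and the relevant curve is the genus-one curve $y^2=(a_1-x)(a_2-x)(a_3+x)$. Setting $\gamma_2=\gamma_1$ in the two identities of Lemma \ref{lemma:polynomial} attached to Case~T3, namely those for $nP_0\sim nP_\infty$ and $nP_0\sim(n-2)P_\infty+P_{\gamma_1}+P_{\gamma_2}$, the factor $\left(s-\tfrac1{\gamma_1}\right)\left(s-\tfrac1{\gamma_2}\right)$ becomes $\left(s-\tfrac1{\gamma_1}\right)^2$, and $\varepsilon\to1$ turns the right-hand side of the second identity into $+1$. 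This yields precisely the two displayed equations of part (a). Because the curve here only acquires a node, its normalization is unchanged and the Riemann--Roch bases used in Lemma \ref{lemma:polynomial} specialize directly, so the equivalence with the rank conditions of Proposition \ref{prop:cayley-double-caustic} goes through verbatim.

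For part (b), the limits $\gamma_2\to\pm\infty$ send $1/\gamma_2\to0$, so every factor $\left(s-\tfrac1{\gamma_2}\right)$ tends to $s$ and, in particular, the product $s\left(s-\tfrac1{\gamma_2}\right)$ becomes $s^2$. Applying this to the identity for $nP_0\sim nP_\infty$ (contributed by S2,~S3,~T2,~T4 for even $n$) gives the first equation of part (b), while applying it to the identity for $nP_0\sim(n-1)P_\infty+P_{\gamma_1}$ (contributed only by S2 and T2, for odd $n$, where $\Q_{\gamma_1}$ is an ellipsoid) gives the second; the right-hand sides $1$ and $-1$ are inherited unchanged. It is reassuring that approaching from the space-like side ($\gamma_2\to-\infty$) and from the time-like side ($\gamma_2\to+\infty$) produce the same limiting equations, and that only the even identity survives when $\Q_{\gamma_1}$ is a one-sheeted hyperboloid, matching Proposition \ref{prop:cayley-light-like}.

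The one step requiring genuine care is the light-like limit: unlike the double caustic, here a branch point escapes to infinity, the Weierstrass points $P_{\gamma_2}$ and $P_\infty$ coalesce, and the genus drops from two to one, the limiting curve being $y^2=(a_1-x)(a_2-x)(a_3+x)(\gamma_1-x)$. One cannot simply substitute $1/\gamma_2=0$ into a fixed polynomial identity and expect the degrees and the monic normalization of $p^*,q^*$ to persist. The clean way to secure the equivalence is to re-run the coefficient-matching computation of Lemma \ref{lemma:polynomial} directly on this limiting genus-one curve: the rank conditions of Proposition \ref{prop:cayley-light-like} already guarantee a function in the corresponding $\LL$-space with a zero of order $n$ at $x=0$, and multiplying by its conjugate and normalizing produces exactly the stated Pell equation. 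The remaining degree count (the product $s^2(s-\tfrac1{a_1})(s-\tfrac1{a_2})(s+\tfrac1{a_3})(s-\tfrac1{\gamma_1})$ has degree six, matching $2m$ against $q_{m-3}^2$, and analogously in the odd case) and the sign tracking are routine.
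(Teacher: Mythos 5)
Your proposal matches the paper's approach exactly: the paper derives Proposition \ref{prop:polynomial-singular} by taking the limits $\gamma_2\to\gamma_1$ and $\gamma_2\to\pm\infty$ in Lemma \ref{lemma:polynomial}, just as Propositions \ref{prop:cayley-double-caustic} and \ref{prop:cayley-light-like} were obtained from Lemma \ref{lemma:cayley}. Your substitutions, sign bookkeeping ($\varepsilon=1$ for the double caustic, the surviving odd case only for an ellipsoidal caustic), and the extra care about the degeneration of the curve in the light-like limit are all correct and, if anything, more detailed than the paper, which states the limit argument in a single sentence.
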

	
\begin{corollary}\label{cor:pell}
If the billiard trajectories within $\E$ with caustics $\Q_{\gamma_1}$ and $\Q_{\gamma_2}$ are $n$-periodic, then there exist real polynomials $\hat{p}_n$ and $\hat{q}_{n-3}$ of degrees $n$ and $n-3$ respectively, which satisfy the Pell equation:
$$
\hat{p}_n^2(s)-s\left(s-\frac1{a_1}\right)\left(s-\frac1{a_2}\right)\left(s+\frac1{a_3}\right)
\left(s-\frac1{\gamma_1}\right)
\left(s-\frac1{\gamma_2}\right)
\hat q_{n-3}^2(s)=1.
$$
\end{corollary}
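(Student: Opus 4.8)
The plan is to deduce Corollary \ref{cor:pell} by chaining together Theorem \ref{th:divisor-conditions} and Lemma \ref{lemma:polynomial}, and then collapsing the four different polynomial identities produced by the lemma into the single asserted Pell equation by a uniform \emph{squaring} step inside the relevant quadratic extension. Thus the work is essentially organisational plus one short algebraic identity, together with a degree count.

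First I would note that if a trajectory with (distinct, non-degenerate) caustics $\Q_{\gamma_1}$, $\Q_{\gamma_2}$ is $n$-periodic, then Theorem \ref{th:divisor-conditions} forces the class of $nP_0$ to be equivalent to one of exactly four divisors: $nP_{\infty}$, $(n-2)P_{\infty}+P_{\gamma_1}+P_{\gamma_2}$, $(n-1)P_{\infty}+P_{\gamma_1}$, or $(n-1)P_{\infty}+P_{\gamma_2}$ (the last two only for odd $n$, and precisely in the situations $\gamma_1>0$, resp. $\gamma_2<0$, supplied by the relevant listed cases of the theorem). By Lemma \ref{lemma:polynomial}, each of these four equivalences is in turn equivalent to a polynomial identity of the common shape
\[
A(s)\,p^2(s)-B(s)\,q^2(s)=c,\qquad c\in\{1,-1,\varepsilon\},
\]
in which $A$ and $B$ are complementary factors of the sextic
\[
W(s)=s\left(s-\tfrac{1}{a_1}\right)\left(s-\tfrac{1}{a_2}\right)\left(s+\tfrac{1}{a_3}\right)\left(s-\tfrac{1}{\gamma_1}\right)\left(s-\tfrac{1}{\gamma_2}\right),
\]
that is $A(s)B(s)=W(s)$; concretely $(A,B)=(1,W)$ in the case $nP_0\sim nP_{\infty}$, the pair splitting off the two caustic factors $(s-1/\gamma_1)(s-1/\gamma_2)$ in the case $nP_0\sim(n-2)P_{\infty}+P_{\gamma_1}+P_{\gamma_2}$, and the pair splitting off the single factor $(s-1/\gamma_1)$, resp. $(s-1/\gamma_2)$, in the two odd cases.

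The key algebraic step is then uniform across all four cases. Given such a solution with $A(s)B(s)=W(s)$, I set
\[
\hat p=A\,p^2+B\,q^2,\qquad \hat q=2pq,
\]
so that $\hat p+\hat q\sqrt{W}=(\sqrt{A}\,p+\sqrt{B}\,q)^2$ and $\hat p-\hat q\sqrt{W}=(\sqrt{A}\,p-\sqrt{B}\,q)^2$. Multiplying these two conjugate expressions gives
\[
\hat p^2-W\,\hat q^2=(A\,p^2-B\,q^2)^2=c^2=1,
\]
which is exactly the Pell equation in the statement, with $W$ the sextic appearing there. I would stress that the squaring is genuinely needed even in the already-monic case $c=1$, $A=1$: it is precisely what doubles the degree of $p$ from the $m$ of Lemma \ref{lemma:polynomial} up to the required value $n$.

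Finally I would carry out the degree bookkeeping, which is the one place demanding a little care and hence the main (mild) obstacle. In each of the four cases the two summands $A\,p^2$ and $B\,q^2$ turn out to have equal degree, namely $2m$ for $n=2m$ even and $2m+1$ for $n=2m+1$ odd, and both carry positive leading coefficients (each is a monic factor times a square); hence there is no cancellation and $\deg\hat p=n$ exactly, while $\deg\hat q=\deg p+\deg q=n-3$ in every case. This matches the prescribed degrees $n$ and $n-3$ and realises $\hat p$, $\hat q$ as real polynomials. The remaining check is simply to confirm that the sign and parity constraints under which the odd cases of Lemma \ref{lemma:polynomial} apply ($\gamma_1>0$, resp. $\gamma_2<0$) are exactly those guaranteed by the corresponding cases of Theorem \ref{th:divisor-conditions}, which completes the proof.
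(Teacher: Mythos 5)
Your proposal is correct and follows essentially the same route as the paper: both pass through the four polynomial identities of Lemma \ref{lemma:polynomial} and then ``square'' in the quadratic extension, and your uniform choice $\hat p=A\,p^2+B\,q^2$, $\hat q=2pq$ coincides with the paper's case-by-case formulas via $A\,p^2+B\,q^2=2A\,p^2-c$. The degree count and the sign/parity matching with Theorem \ref{th:divisor-conditions} are handled correctly.
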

\begin{proof}
If $n=2m$, we know that one of the first two cases of Lemma \ref{lemma:polynomial} is satisfied.
In the first case, take $\hat{p}_n=2p_{m}^2-1$ and $\hat{q}_{n-3}=2p_mq_{m-3}$.
In the second case, we set:
$$
\hat{p}_n(s)=2\left(s-\frac1{\gamma_1}\right)\left(s-\frac1{\gamma_2}\right)p_{m-1}^2(s)
-\varepsilon,
\quad
\hat{q}_{n-3}=2p_{m-1}q_{m-2}.
$$
If $n=2m+1$, one of the last two cases of Lemma \ref{lemma:polynomial} holds.
In the third case, we set:
$$
\hat{p}_n(s)=2\left(s-\frac1{\gamma_1}\right)p_{m}^2(s)+1,
\quad
\hat{q}_{n-3}=2p_{m}q_{m-2},
$$
and in the fourth one:
$$
\hat{p}_n(s)=2\left(s-\frac1{\gamma_2}\right)p_{m}^2(s)-1,
\quad
\hat{q}_{n-3}=2p_{m}q_{m-2}.
$$
\end{proof}

\begin{remark}\label{rem:open-problem}
By considering light-like trajectories with an ellipsoid as caustic, and taking the limit when parameter of the caustic approaches zero, we get the light-like geodesics on the ellipsoid $\E$.
Applying the appropriate limit to the analytic conditions for periodicity obtained in this work, the conditions for periodicity for the Poncelet-style closure theorem for light-like geodesics in the equatorial belt from \cite{GKT2007} can be obtained, thus solving the Problem 5.2 from that paper, see also Problem 7 from \cite{Tab2015}.
\end{remark}

\subsection*{Acknowledgment}
The research of V.~D.~and M.~R.~was supported by the Discovery Project \#DP190101838 \emph{Billiards within confocal quadrics and beyond} from the Australian Research Council and Project \#174020 \emph{Geometry and Topology of Manifolds, Classical Mechanics and Integrable Systems} of the Serbian Ministry of Education, Technological Development and Science.
The authors are grateful to the referee for careful reading and very useful comments and suggestions.

\begin{bibdiv}
\addcontentsline{toc}{section}{References}
\begin{biblist}
\bib{ADR2019rcd}{article}{
	author={Adabrah, A. K.},
	author={Dragivi\'c, V.},
	author={Radnovi\'c, M.},
	title={Periodic billiards within conics in the Minkowski plane and Akhiezer polynomials},
	journal={Regular and Chaotic Dynamics},
	volume={24},
	date={2019},
	number={5},
	pages={464--501}
}

\bib{Akh4}{book}{
	author={Akhiezer, N. I.},
	title={Elements of the theory of elliptic functions},
	series={Translations of Mathematical Monographs},
	volume={79},
	note={Translated from the second Russian edition by H. H. McFaden},
	publisher={American Mathematical Society, Providence, RI},
	date={1990},
	pages={viii+237}
}

\bib{BirkM1962}{article}{
	title={Confocal Conics in Space-Time},
	author={Birkhoff, Garrett},
	author={Morris, Robert},    
	journal={The American Mathematical Monthly},    
	volume={69},
	number={1},   
	date={1962},    
	pages={1-4}
}

\bib{BS2008book}{book}{
	author={Bobenko, Alexander I.},
	author={Suris, Yuri B.},
	title={Discrete differential geometry: Integrable structure},
	series={Graduate Studies in Mathematics},
	volume={98},
	publisher={American Mathematical Society},
	place={Providence, RI},
	date={2008},
	pages={xxiv+404}
}

\bib{Bogatyrev2012}{book}{
	author={Bogatyrev, Andrei},
	title={Extremal polynomials and Riemann surfaces},
	series={Springer Monographs in Mathematics},
	note={Translated from the 2005 Russian original by Nikolai Kruzhilin},
	publisher={Springer, Heidelberg},
	date={2012},
	pages={xxvi+150}
}

\bib{Darboux1870}{article}{
	author={Darboux, Gaston},
	title={Sur les polygones inscrits et circonscrits \`a l'ellipso\"\i de},
	journal={Bulletin de la Soci\'et\'e philomathique},
	volume={7},
	date={1870},
	pages={92--94}
}

\bib{DarbouxSUR}{book}{
	author={Darboux, Gaston},
	title={
		Le\c{c}ons sur la th\'eorie
		g\'en\'erale des surfaces et les
		applications g\'eo\-m\'etri\-ques du
		calcul infinitesimal
	},
	publisher={Gauthier-Villars},
	address={Paris},
	date={1914},
	volume={2 and 3}
}

\bib{DragRadn2004}{article}{
	author={Dragovi\'c, Vladimir},
	author={Radnovi\'c, Milena},
	title={Cayley-type conditions for billiards within $k$ quadrics in $\mathbf R^d$},
	journal={J. of Phys. A: Math. Gen.},
	volume={37},
	pages={1269--1276},
	date={2004}
}

\bib{DragRadn2011book}{book}{
	author={Dragovi\'c, Vladimir},
	author={Radnovi\'c, Milena},
	title={Poncelet Porisms and Beyond},
	publisher={Springer Birkhauser},
	date={2011},
	place={Basel}
}

\bib{DragRadn2012adv}{article}{
	author={Dragovi\'c, Vladimir},
	author={Radnovi\'c, Milena},
	title={Ellipsoidal billiards in pseudo-Euclidean spaces and relativistic quadrics},
	journal={Advances in Mathematics},
	volume={231},
	pages={1173--1201},
	date={2012}
}

\bib{DragRadn2019rcd}{article}{
	author={Dragovi\'{c}, Vladimir},
	author={Radnovi\'{c}, Milena},
	title={Caustics of Poncelet polygons and classical extremal polynomials},
	journal={Regul. Chaotic Dyn.},
	volume={24},
	date={2019},
	number={1},
	pages={1--35}
}

\bib{DragRadn2019cmp}{article}{
	author={Dragovi{\'c}, Vladimir},
	author={Radnovi{\'c}, Milena},
	title={Periodic ellipsoidal billiard trajectories and extremal polynomials},
	date={2019},
	journal={Communication in Mathematical Physics},
	doi={10.1007/s00220-019-03552-y}
}

\bib{DuistermaatBOOK}{book}{
	author={Duistermaat, Johannes J.},
	title={Discrete integrable systems: QRT maps and elliptic surfaces},
	series={Springer Monographs in Mathematics},
	publisher={Springer},
	place={New York},
	date={2010},
	pages={xxii+627},
	isbn={978-1-4419-7116-6}
}

\bib{GKT2007}{article}{
	author={Genin, D.},
	author={Khesin, B.},
	author={Tabachnikov, S.},
	title={Geodesics on an ellipsoid in Minkowski space},
	journal={L'Enseign. Math.},
	volume={53},
	date={2007},
	pages={307--331}
}

\bib{GrifHar1978}{article}{
	author={Griffiths, Philip},
	author={Harris, Joe},
	title={On Cayley's explicit solution to Poncelet's porism},
	journal={EnsFeign. Math.},
	volume={24},
	date={1978},
	number={1-2},
	pages={31--40}
}

\bib{HJN2016}{book}{
	author={Hietarinta, J.},
	author={Joshi, N.},
	author={Nijhoff, F. W.},
	title={Discrete systems and integrability},
	series={Cambridge Texts in Applied Mathematics},
	publisher={Cambridge University Press, Cambridge},
	date={2016},
	pages={xiii+445}
}

\bib{JacobiGW}{book}{
	author={Jacobi, Carl},
	title={Vorlesungen \"uber Dynamic. Gesammelte Werke, Supplementband},
	date={1884},
	address={Berlin}
}

\bib{Joshi2019book}{book}{
	author={Joshi, Nalini},
	title={Discrete Painlev\'{e} equations},
	series={CBMS Regional Conference Series in Mathematics},
	volume={131},
	note={Published for the Conference Board of the Mathematical Sciences},
	publisher={American Mathematical Society, Providence, RI},
	date={2019},
	pages={vi+146}
}

\bib{KhTab2009}{article}{
	author={Khesin, Boris},
	author={Tabachnikov, Serge},
	title={Pseudo-Riemannian geodesics and billiards},
	journal={Advances in Mathematics},
	volume={221},
	date={2009},
	pages={1364--1396}
}

\bib{KozTrBIL}{book}{
	author={Kozlov, Valery},
	author={Treshch\"ev, Dmitry},
	title={Billiards},
	publisher={Amer. Math. Soc.},
	address={Providence RI},
	date={1991}
}

\bib{Tab2015}{article}{
	author={Tabachnikov, Serge},
	title={A baker's dozen of problems},
	journal={Arnold Math. J.},
	volume={1},
	date={2015},
	number={1},
	pages={59--67},
}

\end{biblist}
\end{bibdiv}

\end{document}